    \numberwithin{equation}{section}
\newcommand{\diag}{{diag}}
\newcommand{\F}{\mathbb{F}}
\newcommand{\R}{\mathbb{R}}
\newcommand{\bP}{\mathbb{P}}
\newcommand{\Hq}{\mathbb{H}}
\newcommand{\N}{\mathbb{N}}
\newcommand{\C}{\mathbb{C}}
\newcommand{\bR}{\mathbb{R}}
\newcommand{\bS}{\mathbb{S}}
\newcommand{\B}{\mathcal{B}}
\newcommand{\K}{\mathcal{K}}
\newcommand{\Hi}{\mathcal{H}}
\newcommand{\Hc}{\mathcal{H}_{\C}}
\newcommand{\U}{\mathcal{U}}
\newcommand{\Conv}{{\rm conv}}
\newcommand{\Ran}{{\rm ran}}
\newcommand{\vmin}{\underline{v}}
\newcommand{\cmin}{\underline{c}}
\newcommand{\Span}{\text{span}}
\newcommand\restr[2]{{
  \left.\kern-\nulldelimiterspace 
  #1 
  \right|_{#2} 
  }}
\newtheorem{theorem}{Theorem}[section]
 \newtheorem{corollary}[theorem]{Corollary}
 \newtheorem{lemma}[theorem]{Lemma}
 \newtheorem{proposition}[theorem]{Proposition}
 \theoremstyle{definition}
 \newtheorem{definition}[theorem]{Definition}
 \newtheorem{example}[theorem]{Example}
 \numberwithin{equation}{section}
\newtheorem*{rep@theorem}{\rep@title}
\newcommand{\newreptheorem}[2]{
\newenvironment{rep#1}[1]{%
\def\rep@title{#2 \ref{##1}}%
\begin{rep@theorem}}%
{\end{rep@theorem}}}
\begin{document}

\thanks{The second author was partially supported by FCT through project UID/MAT/04459/2020 and the third author was partially supported by FCT through CMA-UBI, project: UIDB/00212/2020.}

\title{S-spectrum and numerical range of a quaternionic operator}

\author[L. Carvalho]{Lu\'{\i}s Carvalho}
\address{Lu\'{\i}s Carvalho, ISCTE - Lisbon University Institute\\    Av. das For\c{c}as Armadas\\     1649-026, Lisbon\\   Portugal}
\email{luis.carvalho@iscte-iul.pt}
\author[Cristina Diogo]{Cristina Diogo}
\address{Cristina Diogo, ISCTE - Lisbon University Institute\\    Av. das For\c{c}as Armadas\\     1649-026, Lisbon\\   Portugal\\ and \\ Center for Mathematical Analysis, Geometry,
and Dynamical Systems\\ Mathematics Department,\\
Instituto Superior T\'ecnico, Universidade de Lisboa\\  Av. Rovisco Pais, 1049-001 Lisboa,  Portugal
}
\email{cristina.diogo@iscte-iul.pt}
\author[S. Mendes]{S\'{e}rgio Mendes}
\address{S\'{e}rgio Mendes, ISCTE - Lisbon University Institute\\    Av. das For\c{c}as Armadas\\     1649-026, Lisbon\\   Portugal\\ and Centro de Matem\'{a}tica e Aplica\c{c}\~{o}es \\ Universidade da Beira Interior \\ Rua Marqu\^{e}s d'\'{A}vila e Bolama \\ 6201-001, Covilh\~{a}}
\email{sergio.mendes@iscte-iul.pt}
\subjclass[2010]{15B33, 47A12}

\keywords{quaternions, numerical range}
\date{\today}

\maketitle

\begin{abstract}
We study the numerical range of bounded linear operators on quaternionic Hilbert spaces and its relation with the S-spectrum. The class of complex operators on quaternionic Hilbert spaces is introduced and the upper bild of normal complex operators is completely characterized in this setting.
\end{abstract}
\maketitle
\maketitle
\section*{Introduction}
Let $\F$ denote the fields of real numbers $\R$, complex numbers $\C$ or the skew field of Hamilton's quaternions $\Hq$. Let $T$ be a bounded linear operator on a Hilbert space $\mathcal{H}$ over $\F$, with inner product $\langle .,.\rangle$. The numerical range of $T$ is the image of the unit circle $\bS_{\Hi}\subset\Hi$ under the quadratic form $f(x)=\langle Tx,x \rangle$ from $\Hi$ to $\F$. In other words, it is the subset of $\F$
\[W_{\F}(T) = \{\langle Tx, x\rangle: \|x\|=1\},\]
where $\|\,x\,\|=\langle x, x\rangle^{\frac{1}{2}}$ is the induced norm. The geometric structure of $W_{\F}(T)$ depends on the ground field $\F$. Namely,
when $\mathcal{H}$ is a real or complex Hilbert space, $W_{\F}(T)$ is a convex set, as stated by the Toeplitz-Hausdorff Theorem (see \cite{GR}). However, when $\F=\Hq$, convexity of $W_{\Hq}$ may fail (even for one-dimensional $\Hi$). Throughout the paper we let $W(T):=W_{\Hq}(T)$ denote the quaternionic numerical range of $T$.

The numerical range of operators over quaternionic finite dimensional Hilbert spaces was introduced by Kippenhahn \cite{Ki} in 1951. For every quaternion $q\in W(T)$ the similarity class $[q]$ is contained in $W(T)$ and for that reason, we may choose for representatives of that class the complex numbers $s$ or $s^*$ in $[q]\cap \C$. This observation led Kippenhahn to introduce the bild of $T$, $B(T)=W(T)\cap\C$, as a complex set that reflects many of the properties of $W(T)$. An example is precisely convexity: $W(T)$ is a convex subset of $\Hq$ if, and only if, $B(T)$ is convex subset of $\C$. However, the upper-bild $B^+(T)=W(T)\cap\C^+$, is always a convex subset (here, $\C^+$ denotes the closed upper-half complex plane).

There is a substantial body of work on the subject of numerical range for operators on complex Hilbert spaces, both in finite and infinite dimension. Much different is the case of quaternionic linear operators. While earlier studies on the quaternionic numerical range have been carried out in finite dimension by the work of Kippenhahn \cite{Ki}, Au-Yeung \cite{Ye1,Ye2}, So and Thompson \cite{ST} and, more recently, Kumar \cite{K} and the authors \cite{CDM1,CDM2,CDM3,CDM4,CDM5}, there is a lack of results in infinite dimensional, quaternionic Hilbert spaces. A possible explanation for this
is the nonexistence, until recently, of a suitable notion of spectrum in the quaternionic setting. Although the spectrum is a fundamental notion in physics, where a quaternionic model of quantum mechanics exists since 1936, thanks to the work of Birkhoff and Von Neumann \cite{BvN}, only in 2006 the appropriate notion of spectrum was found. Colombo and Sabadini proposed the notion of  $S$-spectrum (see page 6 of \cite{CGK} for an account on the history of S-spectrum) which lead to spectral theorems for quaternionic normal operators, see \cite{ACK}.
It is worth mentioning that there are several books devoted to functional calculus and to the spectral theory on the S-spectrum, see for instance \cite{CSS} and \cite{CG}.

In this paper we study the numerical range for bounded linear operators on quaternionic Hilbert spaces and its relation with the $S$-spectrum, extending to the quaternionic setting some results of complex Hilbert spaces.
In addition, we generalize results from \cite{CDM4} and \cite{CDM5}
on the shape of the bild and the upper-bild to infinite dimensional quaternionic Hilbert spaces.

The organization of the paper is as follows. In Section 1 we introduce concepts and the theory that provides a background for our work. In Section 2 we show that the $S$-spectrum and the closure of the numerical range are invariant under
approximate unitary equivalence (Proposition \ref{S-spc and nr closed under a.u.}). The class of complex operators on a quaternionic Hilbert space is introduced and it is proved that the $S$-spectrum of a complex operator is given by the similarity classes of its $\C$-spectrum, see Proposition \ref{Prop Sspectrum complex op}. In Theorem \ref{Theo_Sspecturm_NR} it is shown that the $S$-spectrum is contained in the closure of quaternionic numerical range.
Section 3 is devoted to the study of the numerical range of complex operators on quaternionic Hilbert spaces. After describing the bild of $T$ (Proposition \ref{prop bild complex op}), a characterization of the upper-bild as the convex hull of the sets $\overline{{W}(T)\cap\C^+}$, $\overline{{W}(T^*)\cap\C^+}$ and two real numbers $\underline{v}$ and $\overline{v}$ is obtained, see Theorem \ref{theo upper bild}. In Section 4 we focus on the case of normal operators and we show that every quaternionic normal operator is approximately unitarily equivalent to a complex diagonal operator (see Proposition \ref{prop normal app diagonal}). We are then able to prove that the closure of the upper bild of $T$ is the convex hull of $\sigma_S(T)\cap\C^+$ and the above mentioned real values $\underline{v}$ and $\overline{v}$ (see Theorem \ref{theo_upperbild_spectrum}). Finally, the theory developed so far is applied to characterize the real numbers $\underline{v}$ and $\overline{v}$, from pairs of eigenvalues of $T$ (Theorem \ref{theo_vmin_vmax}).

\section{Preliminaries}

The division ring of real quaternions $\Hq$ is an algebra over $\R$ with basis $\{1, i, j, k\}$ and  product given by $i^2=j^2=k^2=ijk=-1$. The pure quaternions are denoted by $\bP=\mathrm{span}_{\R}\,\{i,j,k\}$. The real and imaginary parts of a quaternion $q=a_0+a_1i+a_2j+a_3k\in\Hq$ are denoted by $Re(q)=a_0$ and $Im(q)=a_1i+a_2j+a_3k\in\bP$, respectively. The conjugate of $q$ is given by $q^*=Re(q)-Im(q)$ and its norm is $|q|=\sqrt{qq^*}$. Two quaternions $q_1,q_2\in\Hq$ are called similar, and we write $q_1\sim q_2$, if there exists $s \in \Hq$ with $|s|=1$ such that $s^{*}q_2 s=q_1$. Similarity is an equivalence relation and the class of $q$ is denoted by $[q]$. A necessary and sufficient condition for the similarity of $q_1$ and $q_2$ is that $Re(q_1)=Re(q_2) \textrm{ and }|Im(q_1)|=|Im(q_2)|$.

We now briefly recall the definition of quaternionic Hilbert space (see \cite[Chapter 9]{CGK} for a detailed account).
A right $\Hq$-module $\Hi$ is called a right pre-Hilbert space if there exists a hermitian inner product $\langle.,.\rangle :\mathcal{H}\times\mathcal{H}\to\Hq$, $(x,y) \to \langle x,y\rangle$, satisfying the following properties:
\begin{itemize}
  \item[$(i)$] $\langle xa+yb,w\rangle=\langle x,w\rangle a+\langle y,w\rangle b$,
  \item[$(ii)$] $\langle x,y\rangle=\langle y,x\rangle^*$,
  \item[$(iii)$] $\langle x,x\rangle\geq 0$, and $\langle x,x\rangle=0$ if and only if $x=0$,
\end{itemize}
 for all $x,y,w\in\mathcal{H}$ and $a, b\in\Hq$.
 It follows that
 \[
 \langle x,ya+wb\rangle={a}^*\langle x,y\rangle + {b}^*\langle x,w\rangle.
 \]
If $\Hi$ is complete with respect to the norm $\|x\|=\sqrt{\langle x,x\rangle}$, $x\in \Hi$, then $\Hi$ is called a right quaternionic Hilbert space.

In this paper, we consider $\Hi$ to be separable, i.e, there exists a countable orthonormal basis $\{e_n:n\in\mathbb{N}\}$. Consequently, every $x\in\Hi$ can be uniquely written as
\[
x=\sum_{n=1}^{\infty}e_n\langle x,e_n\rangle=\sum_{n=1}^{\infty}e_nx_n,
\]
where $x_n=\langle x,e_n\rangle\in\Hq$, for all $n\in\mathbb{N}$. Every separable Hilbert space $\Hi$ is isometrically isomorphic to $\ell^2=\ell^2(\N,\F)=\{(x_n)_n\in\F^{\N}:\sum_n |x_n|^2<\infty\}$ via the map $x\mapsto (\langle x,e_n\rangle)_{n\in\N}$, with $x=(x_n)_n$.

A right linear operator $T$  is a map $T:\mathcal{H}\to\mathcal{H}$ such that $T(x+y)=Tx+Ty$ and $T(xa)=(Tx)a$, for all $x,y\in\mathcal{H}$ and $a\in\F$.
We denote the set of all right linear bounded operators on $\mathcal{H}$ by $\mathcal{B}(\mathcal{H})$.
To have a linear structure on $\mathcal{B}(\mathcal{H})$, the Hilbert space $\Hi$ needs to have a left $\Hq$-module structure (see \cite[Chapter 3]{CGK}).

%

The norm in $\mathcal{B}(\mathcal{H})$ is $\|T\|=\mathrm{sup}\,\{\|Tx\|:\|x\|=1\}$.  The adjoint of $T\in\mathcal{B}(\mathcal{H})$ is the operator $T^*\in\mathcal{B}(\mathcal{H})$ which satisfies $\langle Tx,y\rangle=\langle x,T^*y\rangle$, for all $x,y\in\mathcal{H}$. As in the complex case, an operator $T\in\mathcal{B}(\mathcal{H})$ is said to be self-adjoint if $T^*=T$, anti-self-adjoint if $T^*=-T$, normal if $T^*T=TT^*$ and unitary if $T^*T=TT^*=I$, where $I$ is the identity operator. We denote by $\U(\Hi)$ the group of unitary operators.  An operator $T\in\mathcal{B}(\mathcal{H})$ is said to be compact if $(T x_n)_n$ has a convergent subsequence, for every bounded sequence $(x_n)_n$ of $\mathcal{H}$.
Recall that a  bounded linear operator $T$ is invertible  if, and only if, $T$ has a dense range and $T$ is bounded
from below, the latter meaning that there exists $k > 0$ such that $\|Tx\| \geq k\|x\|$ for every $x \in \Hi$. The group of invertible operators is denoted by $\B(\Hi)^{-1}$. See \cite[Chapter 9]{CGK} for further details on $\mathcal{B}(\Hi)$.

Given $T\in\mathcal{B}(\Hi)$ and $q\in\Hq$, we define the operator $\Delta_q(T):\Hi\longrightarrow \Hi$ by $\Delta_q(T)=T^2-2Re(q)T+|q|^2 I$.
Clearly, $\Delta_q(T)$ is a bounded linear operator. The S-spectrum of $T$, $\sigma_S(T)$, is defined by
\begin{eqnarray*}
\sigma_S(T)   &=& \{q\in \Hq: \Delta_q(T)\notin \B(\Hi)^{-1}\},\\
  &=& \Big\{q\in \Hq : \ker(\Delta_q(T))\neq \{0\} \;\text{or}\; \Ran(\Delta_q(T))\neq \Hi\Big\}.
\end{eqnarray*}
The S-spectrum $\sigma_S(T)$ is a compact nonempty subset of $\Hq$ and it is always contained in the closed ball of radius $\|T\|$ around origin $\overline{B(0, \|T\|)}$ (\cite[Theorem 9.2.2]{CGK}). One can show that $q\in \sigma_S(T)$ is equivalent to $[q]\subseteq \sigma_S(T)$. 
The set of right eigenvalues is the set
 \[
 \sigma_r(T)=\{q\in \Hq : Tx=xq, \, x\in\Hi\backslash \{0\}\}.
 \]
 Clearly, $\sigma_r(T)\subseteq \sigma_S(T)$ (see \cite[Proposition 3.1.9]{CGK}).
 In particular, if $\Hi$ is finite dimensional then $\sigma_r(T)=\sigma_{S}(T)$.


We end this Section by recalling that the quaternionic numerical range of $T\in\B(\Hi)$ is the subset of $\Hq$ given by:
\[
W(T)=\{\langle T{x}, {x}\rangle : {x}\in \bS_{\Hi}\},
\]
where $\bS_{\Hi}=\{x\in\Hi: \|x\|=1\}$ is the unit sphere of $\bS_\Hi$. Note that $\bS_{\Hi}$ is the boundary of the unit ball $B(0,1)\subset\mathcal{H}$.
It follows from the definition that the numerical range of $T$ is invariant under unitary equivalence, that is,  $W(T)=W(U^*TU)$, for every $U\in \U(\Hi)$, that $W(T)$ is contained in the closed ball, $\overline{B(0, \|T\|)}$, and when $T$ is self-adjoint, $W(T)\subset\R$.

We always have $\sigma_r(T)\subseteq W(T)$, for every bounded operator $T\in B(\Hi)$.
In fact, if $\lambda\in\sigma_r(T)$ and ${x}\in\bS_{\Hi}$ is a corresponding eigenvector, if we write $Tx=x\lambda$, then $\langle T{x}, {x}\rangle=\langle {x}\lambda, {x}\rangle={\lambda}\in W(T)$. Hence, $W(T)$ contains all right eigenvalues of $T$.




A simple computation shows that  $q\in W(T)$ is equivalent to $[q]\subseteq W(T)$. Therefore, it is enough to study the subset of complex elements in each similarity class, i.e, the bild $B(T)$ of $T$,
\[
B(T)=W(T)\cap \C.
\]

Au-Yeung found necessary and sufficient conditions for the convexity of $W(T)$ (see \cite{Ye1, Ye2}).
Although the bild may not be convex, the upper bild $B^+(T)=W(T)\cap \C^+$  always is. The convexity of the upper bild was shown in the particular case where $T$ is a $n\times n$ quaternionic matrix. The proof reduces to the $2\times 2$ case, see \cite[Lemma 2.4]{ST}. A similar reasoning can be applied to the infinite dimensional case.

\begin{theorem}
Let $T\in \B(\Hi)$. The upper bild $B^+(T)$ is convex.
\end{theorem}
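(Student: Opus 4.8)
The plan is to reduce the convexity of the infinite-dimensional upper bild to the known $2\times2$ matrix case by compressing $T$ to a suitable two-dimensional subspace. Fix $z_1,z_2\in B^+(T)$; I want to show the segment $[z_1,z_2]$ lies in $B^+(T)$. Since $B^+(T)\subseteq W(T)$, there are unit vectors $x_1,x_2\in\Hi$ with $\langle Tx_i,x_i\rangle=z_i$. If $x_1$ and $x_2$ are $\Hq$-linearly dependent, say $x_2=x_1a$, then $|a|=1$ (as both vectors are unit) and $z_2=a^*\langle Tx_1,x_1\rangle a=a^*z_1a\in[z_1]$; since $[z_1]\cap\C^+$ consists of a single point, this forces $z_1=z_2$ and there is nothing to prove. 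So I may assume $x_1,x_2$ are linearly independent, and $V:=\Span_{\Hq}\{x_1,x_2\}$ is a two-dimensional right quaternionic Hilbert space under the restricted inner product.

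Next I would compress $T$ to $V$. Let $P\in\B(\Hi)$ be the orthogonal projection onto the (closed) subspace $V$ and set $T_V:=PT|_V\colon V\to V$. Since $P=P^*$ and $Px=x$ for every $x\in V$, any unit vector $x\in V$ satisfies $\langle T_Vx,x\rangle=\langle PTx,x\rangle=\langle Tx,Px\rangle=\langle Tx,x\rangle$; hence $W(T_V)\subseteq W(T)$, so $B^+(T_V)\subseteq B^+(T)$, while $z_1,z_2\in B^+(T_V)$ because $x_1,x_2\in V$. Picking an orthonormal basis of $V$ by Gram--Schmidt identifies $T_V$, up to unitary equivalence, with a $2\times2$ quaternionic matrix; and the numerical range, hence the upper bild, is invariant under unitary equivalence (Section~1).

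Finally, the finite-dimensional result applies directly: the upper bild of a $2\times2$ quaternionic matrix is convex, by \cite[Lemma 2.4]{ST}. Therefore $[z_1,z_2]\subseteq B^+(T_V)\subseteq B^+(T)$, which proves the theorem. No limiting argument is required, and closedness of $B^+(T)$ is neither used nor asserted. The only genuinely nontrivial ingredient is the $2\times2$ matrix case imported from \cite{ST}; the new content is merely the observation that convexity of the upper bild is a \emph{local} property, read off from two-dimensional compressions, which lets the finite-dimensional statement bootstrap to an arbitrary separable $\Hi$. I expect the only steps needing a little care to be the degenerate case of $\Hq$-linearly dependent $x_1,x_2$ handled above, and the routine verification that compressing $T$ to $V$ does not enlarge the numerical range.
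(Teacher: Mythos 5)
Your proof is correct and follows essentially the same route the paper intends: the paper merely remarks that the finite-dimensional reduction to the $2\times 2$ case of \cite[Lemma 2.4]{ST} ``can be applied to the infinite dimensional case,'' and your two-dimensional compression argument (together with the careful handling of the degenerate case where $x_1,x_2$ are $\Hq$-linearly dependent) is exactly the reasoning being alluded to, spelled out in full.
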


\section{$S$-Spectrum and quaternionic numerical range}

We start this Section with the  notion of approximate unitary equivalence.  In view of Proposition \ref{S-spc and nr closed under a.u.} below, it is enough to characterize the S-spectrum and the closure of the numerical range up to approximate unitary equivalence.

\begin{definition}
We say that $T, R\in \B(\Hi)$ are approximately unitarily equivalent, and we write $T\sim_a R$, if there exists a sequence $(U_n)_{n\in\N}\subset\U(\Hi)$  such that $\lim_{n \rightarrow \infty}\|U_nRU_n^*-T\|=0.$
\end{definition}
From the group structure of $\U(\Hi)$, it can easily be shown that $\sim_a$ is an equivalence relation on $\B(\Hi)$.


Let $\B(\Hi)$ be endowed with the metric induced by the uniform topology. A sequence $(T_n)_n\subset\B(\Hi)$ converges to $T\in\B(\Hi)$ uniformly, and we write $T_n\to T$ if, and only if, $\|T_n-T\|\to 0$.

An immediate consequence is the following property for the S-spectrum.

\begin{proposition}\label{prop semicontinuity S-spec}
Given a sequence $(T_n)_n\subset\mathcal{B}(\mathcal{H})$, if $T_n\to T$ then
\[
\limsup\,\sigma_S(T_n)\subseteq\sigma_S(T).
\]
\end{proposition}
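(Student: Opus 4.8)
The plan is to show that if $q \notin \sigma_S(T)$, then $q \notin \limsup \sigma_S(T_n)$, i.e.\ $q$ fails to belong to $\sigma_S(T_n)$ for all sufficiently large $n$; this is exactly the statement that $\limsup \sigma_S(T_n) \subseteq \sigma_S(T)$. So suppose $q \notin \sigma_S(T)$, meaning $\Delta_q(T) = T^2 - 2\re(q) T + |q|^2 I \in \B(\Hi)^{-1}$. The first step is to observe that $T_n \to T$ uniformly forces $\Delta_q(T_n) \to \Delta_q(T)$ uniformly. Indeed, $\Delta_q(T_n) - \Delta_q(T) = (T_n^2 - T^2) - 2\re(q)(T_n - T) = (T_n - T)T_n + T(T_n - T) - 2\re(q)(T_n-T)$, and since $(T_n)_n$ is uniformly bounded (being convergent in norm), each term goes to $0$ in operator norm. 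Hence $\|\Delta_q(T_n) - \Delta_q(T)\| \to 0$.

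The second step is the standard fact that $\B(\Hi)^{-1}$ is open in the uniform topology: if $A \in \B(\Hi)^{-1}$ and $\|B - A\| < \|A^{-1}\|^{-1}$, then $B \in \B(\Hi)^{-1}$, via the Neumann series $B^{-1} = \sum_{k=0}^{\infty} (A^{-1}(A-B))^k A^{-1}$, which converges absolutely in $\B(\Hi)$ since $\|A^{-1}(A-B)\| < 1$. One should check this argument transfers verbatim to $\B(\Hi)$ for a quaternionic Hilbert space $\Hi$; the only ingredient is that $\B(\Hi)$ is a Banach space (complete normed space) and the composition is submultiplicative for the norm, both of which hold here (see \cite[Chapter 9]{CGK}). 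Applying this with $A = \Delta_q(T)$, there exists $\varepsilon > 0$ such that any operator within distance $\varepsilon$ of $\Delta_q(T)$ is invertible.

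Combining the two steps: by Step 1 there is $N$ such that $\|\Delta_q(T_n) - \Delta_q(T)\| < \varepsilon$ for all $n \geq N$, and then by Step 2, $\Delta_q(T_n) \in \B(\Hi)^{-1}$ for all $n \geq N$, i.e.\ $q \notin \sigma_S(T_n)$ for all $n \geq N$. Therefore $q \notin \limsup \sigma_S(T_n)$, which completes the argument. I do not anticipate a genuine obstacle here; the only point requiring a little care is confirming that the openness of the invertible group (the Neumann series argument) goes through in the quaternionic setting rather than being quotable only for complex Hilbert spaces, but since $\B(\Hi)$ is a (real) Banach algebra with submultiplicative norm and identity, the classical proof applies without change.
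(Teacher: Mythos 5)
Your two analytic steps are correct (and they are the contrapositive of the single fact the paper uses, namely that the non-invertible operators form a closed set), but the reduction of the proposition to them is not. The $\limsup$ here is the upper limit of a sequence of \emph{sets}: as the paper's own proof makes explicit, $q\in\limsup\sigma_S(T_n)$ means there are indices $n_k\to\infty$ and points $q_{n_k}\in\sigma_S(T_{n_k})$ with $q_{n_k}\to q$, and these $q_{n_k}$ need not equal $q$. Your contrapositive only shows that $q$ itself is absent from $\sigma_S(T_n)$ for all large $n$; that rules out $q\in\bigcap_{k}\bigcup_{n\ge k}\sigma_S(T_n)$, but it does not rule out $q$ being an accumulation point of the tail spectra, i.e.\ nothing in your argument excludes a sequence $q_n\in\sigma_S(T_n)$ with $q_n\to q$ and $q_n\neq q$. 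So as written you prove a strictly weaker statement than the one intended (and than the one the paper's proof establishes).

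The gap is easy to close with the estimates you already have. Since $\sup_n\|T_n\|<\infty$, the map $q'\mapsto\Delta_{q'}(T_n)$ is continuous uniformly in $n$:
\[
\|\Delta_{q'}(T_n)-\Delta_{q}(T_n)\|\le 2\,|\re(q')-\re(q)|\,\|T_n\|+\bigl|\,|q'|^2-|q|^2\,\bigr|.
\]
Combining this with your Step 1 gives $\delta>0$ and $N\in\N$ such that $\|\Delta_{q'}(T_n)-\Delta_q(T)\|<\varepsilon$ for all $|q'-q|<\delta$ and all $n\ge N$; your Step 2 then shows that the entire ball $B(q,\delta)$ is disjoint from $\sigma_S(T_n)$ for $n\ge N$, and this does exclude $q$ from the upper limit. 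The paper sidesteps the issue by arguing directly: it takes $q_{n_k}\in\sigma_S(T_{n_k})$ with $q_{n_k}\to q$, notes that $\Delta_{q_{n_k}}(T_{n_k})\to\Delta_q(T)$ (letting the quaternion and the operator vary simultaneously), and concludes from closedness of the non-invertibles --- the same openness fact you establish via the Neumann series.
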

\begin{proof}
We begin by recalling that $\limsup \sigma_S(T_n)= \overline{\cap_{k=1}^{+\infty} \cup_{n=k}^\infty \sigma_S(T_n)}$. Let $q \in \limsup \sigma_S(T_n)$. There is a sequence $(n_k)_k \subset \N$ such that $q_{n_k} \in \sigma_S(T_{n_k})$ and  $q_{n_k} \to q$. Since $T_{n_k} \to T$ then $\Delta_{q_{n_k}}(T_{n_k}) \to \Delta_{q}(T)$.  We know that $q_{n_k} \in \sigma_S(T_{n_k})$, or, in other words, that $\Delta_{q_{n_k}}(T_{n_k}) $ is a non invertible operator. Since the set of non invertible operators is closed then $\Delta_{q}(T)$ is a non invertible operator or, again in other words, that $q \in \sigma_S(T)$.
\end{proof}

We are now in condition to show that the S-spectrum and the closure of the numerical range are invariant under approximate unitary equivalence.

\begin{proposition}\label{S-spc and nr closed under a.u.}
Let $T, R\in \B(\Hi)$ and suppose that $T\sim_a R$. Then,
\begin{itemize}
  \item [$(i)$] $\sigma_S(T)=\sigma_S(R)$;
  \item [$(ii)$] $\overline{W(T)}=\overline{W(R)}$.
\end{itemize}
\end{proposition}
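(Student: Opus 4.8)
The plan is to reduce the statement to three ingredients already available: the unitary invariance of both $\sigma_S$ and $W$, the semicontinuity of the S-spectrum (Proposition \ref{prop semicontinuity S-spec}), and the fact that $\sim_a$ is an equivalence relation, in particular symmetric. Write $T_n:=U_n R U_n^*$, so that by hypothesis $T_n\to T$ uniformly.

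For $(i)$ I would first observe that conjugation by a unitary is equivariant for $\Delta_q$: since $(U_n R U_n^*)^2=U_n R^2 U_n^*$ and $U_n U_n^*=I$, one gets $\Delta_q(T_n)=U_n\,\Delta_q(R)\,U_n^*$, so $\Delta_q(T_n)\in\B(\Hi)^{-1}$ if and only if $\Delta_q(R)\in\B(\Hi)^{-1}$; hence $\sigma_S(T_n)=\sigma_S(R)$ for every $n$. Applying Proposition \ref{prop semicontinuity S-spec} to $T_n\to T$ yields $\limsup\sigma_S(T_n)\subseteq\sigma_S(T)$. But here $(\sigma_S(T_n))_n$ is the constant sequence $\sigma_S(R)$, which is closed (it is compact), so $\limsup\sigma_S(T_n)=\sigma_S(R)$ and therefore $\sigma_S(R)\subseteq\sigma_S(T)$. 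Since $\sim_a$ is symmetric, $R\sim_a T$ as well, and the same argument gives $\sigma_S(T)\subseteq\sigma_S(R)$, hence equality.

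For $(ii)$ the unitary invariance of the numerical range recorded in Section 1 gives $W(T_n)=W(U_n R U_n^*)=W(R)$ for every $n$. Now fix $q\in W(T)$ and a unit vector $x$ with $q=\langle Tx,x\rangle$. Then $\langle T_n x,x\rangle\in W(T_n)=W(R)$, and by Cauchy--Schwarz
\[
|q-\langle T_n x,x\rangle|=|\langle (T-T_n)x,x\rangle|\le\|T-T_n\|\longrightarrow 0 \quad (n\to\infty),
\]
so $q\in\overline{W(R)}$. This shows $W(T)\subseteq\overline{W(R)}$, hence $\overline{W(T)}\subseteq\overline{W(R)}$; exchanging the roles of $T$ and $R$ via $R\sim_a T$ gives the reverse inclusion, and $(ii)$ follows.

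The argument is essentially bookkeeping: the only genuine computation is the equivariance $\Delta_q(U R U^*)=U\,\Delta_q(R)\,U^*$, and the only structural input beyond Section 1 is Proposition \ref{prop semicontinuity S-spec}. I do not expect a real obstacle; the points to be careful with are using the symmetry of $\sim_a$ to obtain both inclusions (rather than attempting a direct two-sided estimate), and noting that $\sigma_S(R)$ is already closed so that passing to $\limsup$ along the constant sequence does not enlarge it.
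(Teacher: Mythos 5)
Your proof is correct and follows essentially the same route as the paper: both parts rest on unitary invariance, the semicontinuity of the S-spectrum applied to the constant sequence $\sigma_S(U_nRU_n^*)=\sigma_S(R)$, and the symmetry of $\sim_a$ to obtain the reverse inclusions. Your explicit verification of $\Delta_q(URU^*)=U\,\Delta_q(R)\,U^*$ simply spells out a step the paper leaves implicit.
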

\begin{proof}
Let $(U_n)_n$ be a sequence of unitary operators such that $U_nRU_n^*\to T$.

From Proposition \ref{prop semicontinuity S-spec}, we have
$
\sigma_S(R)=\sigma_S(U_nRU_n^*)=\limsup\sigma_S(U_nRU_n^*)\subseteq\sigma_S(T)
$
and equality (i) follows by symmetry.

 To prove (ii),  fix $\lambda\in W(T)$ and choose a vector $u\in\bS_{\Hi}$ such that $\lambda=\langle Tu, u\rangle$. We have
\[
\lim_{n \rightarrow \infty}|\lambda-\langle RU_n^*u, U_n^*u\rangle|=\lim_{n \rightarrow \infty}|\langle (T-U_nRU_n^*)u,u\rangle|=0.
\]
Since $U_n^*u$ is a unit vector for all $n\in\N$, $\lambda\in \overline{W(R)}$, and since $\lambda\in W(T)$ was arbitrary, $\overline{W(T)} \subseteq\overline{W(R)}$. The equality follows by symmetry.
\end{proof}

For every orthonormal basis $\mathcal{E}=\{e_n: n\in\mathbb{N}\}$ of $\Hi$, there is a decomposition of $\Hi$ defined as follows. Let $\mathcal{H}_{\C}=\overline{\mathrm{span}_{\C}\{\mathcal{E}\}}$ denote the right $\C$-vector space with orthonormal basis $\mathcal{E}$. Every $u\in\Hi$, can be written as
\begin{align*}
  u & =\sum_{n\in\N}e_n\langle u,e_n\rangle= \sum_{n\in\N}e_nu_n\\
   & = \sum_{n\in\N}e_n(x_{n}+y_{n}j)= \sum_{n\in\N}e_nx_{n}+\sum_{n\in\N}e_ny_{n}j \\
   & =x+yj,
\end{align*}
where $u_n=\langle u,e_n\rangle\in\Hq,$ and $x_{n}=\langle x,e_n\rangle,y_{n}=\langle y,e_n\rangle\in\C$ ($n\in\N$). Hence, there is a decomposition with respect to the basis $\mathcal{E}$
\begin{equation}\label{decomposition H}
\Hi=\Hi_{\C}\oplus\Hi_{\C}j,
\end{equation}
where $\Hi_{\C}$ is a $\C$-vector space, and every vector $u\in\Hi$ decomposes uniquely as a sum $u=x+yj,$ for some $x,y\in\Hi_{\C}$. For convenience,
we sometimes write $\Hi$ multiplicatively as $\Hi_{\C}^2$, and represent $u\in\Hi$ as a pair $(x,y)\in\Hi_{\C}^2$. It should be noted that neither $\Hi_{\C}$ nor $\Hi_{\C}j$ are $\Hq$-vector spaces. Here, $\oplus$ is used as a direct sum of real spaces.

Moreover, the $\Hq$-inner product $\langle .,.\rangle$ on $\Hi$ restricts to a $\C$-inner product on $\Hi_{\C}$ denoted $\langle .,.\rangle_{\C}$. In fact, given $x,y\in\Hi_{\C}$, we have:
\begin{equation*}
  \langle x,y\rangle_{\C}  :=\Big\langle\sum_{n}e_nx_n,\sum_{m}e_my_m\Big\rangle =\sum_{n}y^*_nx_n\in\C.
\end{equation*}

Endowed with the inner product $\langle .,.\rangle_{\C}$, $\Hi_{\C}$ becomes a complex Hilbert space and we may relate the norm of a vector $u=x+yj$ in $\Hi$ with the norms of the correspondent vectors $x,y$ in $\Hi_{\C}$
\begin{equation}\label{relating norms}
\|x+yj\|^2=\|x\|^2+\|y\|^2,
\end{equation}
since $\langle x,yj\rangle=-\langle yj,x\rangle$.

The notion of complex operator on a quaternionic Hilbert space $\Hi$, which we now introduce, is central in our work. It allows us to define the $\mathbb{C}$-spectrum of a complex operator and relate it with the S-spectrum.

\begin{definition}\label{def complex op}
An operator $T\in \B(\Hi)$ is called a complex operator if there is an orthonormal basis $\mathcal{E}=\{e_n: n\in\mathbb{N}\}$ of $\Hi$ such that
\[
\langle T(e_n),e_m\rangle\in\C, \quad \text{for every} \quad n,m\in\mathbb{N}.
\]
\end{definition}

We observe that this class contains the normal operators, up to approximate unitary equivalence.

Let $T\in\mathcal{B}(\Hi)$ be a complex operator with respect to the basis $\mathcal{E}$. Then, there is a decomposition of $\Hi$ as in (\ref{decomposition H}) for the basis $\mathcal{E}$ and we can consider the restriction $T_{|}:=T_{|\Hi_{\C}}:\Hi_{\C}\to\Hi$ of $T$. Of course, this is nothing more than the composition of the maps $\Hi_{\C}\hookrightarrow\Hi_{\C}\oplus\Hi_{\C}j=\Hi\to\Hi, T_{|}=T\circ\iota,$ where $\iota:\Hi_{\C}\hookrightarrow\Hi_{\C}\oplus\Hi_{\C}j$ is the injective map $\iota(x)=x+0j$. Although $T_{|}$ is not a $\Hq$-linear map, since $\Hi_{\C}$ is not a $\Hq$-linear space, we easily conclude that it is a right ${\C}$-linear map on $\Hi_{\C}$. The linearity being easy, let us show that $T_{|}(\Hi_{\C})\subseteq\Hi_{\C}$. Take $x\in\Hi_{\C}$. We have
\[
T_{|}(x)=T(x)= T\Big(\sum_{n}e_nx_n\Big)=\sum_nT(e_n)x_n,
\]
and since $T$ is complex,
\[
\langle T_{|}(x),e_m\rangle=\Big\langle\sum_nT(e_n)x_n,e_m\Big\rangle=\sum_n\langle T(e_n),e_m\rangle x_n\in\C,
\]
for every $e_m\in\mathcal{E}$. Hence, $T_{|}(\Hi_{\C})\subseteq\Hi_{\C}$ and, for every $x+yj\in\Hi=\Hi_{\C}\oplus\Hi_{\C}j$, we have
\[
T(x+yj)=T(x)+T(y)j=T_{|}(x)+T_{|}(y)j.
\]

It is well known that one needs $\Hi$ to be a two-sided $\Hq$-vector space in order to define a vector space structure on $\mathcal{B}(\Hi)$, see the discussion in page 136 of \cite{CSS}. However, $\Hi_{\C}$ was defined as a right $\C$-vector space only. It turns out this is enough to have a right $\C$-vector space structure on the set $\mathcal{B}(\Hi_{\C})$ of bounded right $\C$-linear operators on $\Hi_{\C}$. In fact, given $\lambda\in\C$ define for $S\in\mathcal{B}(\Hi_{\C})$,
\[
(S\cdot\lambda)(x):=S(x\lambda).
\]
Then, for every $x,y\in\Hi_{\C}$ and every $\alpha\in\C$,
\begin{align*}
  (S\cdot\lambda)(x+y\alpha) & = S((x+y\alpha)\lambda) = S(x\lambda+y\alpha\lambda)\\
   & = S(x\lambda)+S(y\lambda)\alpha  = (S\cdot\lambda)(x)+(S\cdot\lambda)(y)\alpha.
\end{align*}
An example is the right multiplication by a complex number $\lambda$ which is the right linear operator $I_{\Hc}\cdot\lambda\in\mathcal{B}(\Hi_{\C})$ given by $(I_{\Hc}\cdot\lambda)(x)=x\lambda$ ($x\in\Hi_{\C}$), where $I_{\Hc}$ denotes the identity operator on $\Hi_{\C}$. In particular, if $T\in\mathcal{B}(\Hi_{\C})$ is a complex operator, then for every $\lambda\in\C$, $T-I_{\Hc}\cdot\lambda$ belongs to $\mathcal{B}(\Hi_{\C})$.
Next definition of $\C$-spectrum is the natural notion of spectrum in right complex Hilbert spaces.

%
%
%
%

\begin{definition}\label{def C-spectrum}
If $T\in \B(\Hi)$ is a complex operator, the $\C$-spectrum of $T$ is the subset of $\C$ given by
\[
\sigma_{\C}(T):=\sigma_{\C}(T_{|})=\{\lambda\in \C: T_{|}- I_{\Hc}\cdot\lambda \quad \text{is not invertible in} \quad \B(\Hi_{\C})\}.
\]
\end{definition}

Next result characterizes the S-spectrum of a  bounded complex operator
in terms of the $\C$-spectrum.

\bigskip

\begin{proposition}\label{Prop Sspectrum complex op}
Let $T$ be a complex operator defined as above. Then
\[
\sigma_S(T)=[\sigma_{\C}(T)],
\]
where $[\sigma_{\C}(T)]=\bigcup_{\lambda\in\sigma_{\C}(T)}[\lambda]$.
\end{proposition}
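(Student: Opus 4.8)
The plan is to analyze the operator $\Delta_q(T) = T^2 - 2\re(q)T + |q|^2 I$ on the decomposition $\Hi = \Hi_\C \oplus \Hi_\C j$ and reduce the invertibility of $\Delta_q(T)$ on $\Hi$ to the invertibility of a related operator on the complex Hilbert space $\Hi_\C$. Since $T$ is complex with respect to $\mathcal{E}$, we have $T(x+yj) = T_|(x) + T_|(y)j$, and because $\re(q)$ and $|q|^2$ are real, the operator $\Delta_q(T)$ also respects the decomposition: $\Delta_q(T)(x+yj) = \big(\Delta_q(T)\big)_|(x) + \big(\Delta_q(T)\big)_|(y)j$, where $\big(\Delta_q(T)\big)_| = T_|^2 - 2\re(q)T_| + |q|^2 I_{\Hc}$ acts on $\Hi_\C$. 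The key algebraic observation is that this real-coefficient quadratic in $T_|$ factors over $\C$: writing $s = \re(q) + i\,|\Ima(q)| \in [q]\cap\C$, we have $\Delta_q(T)_| = (T_| - I_{\Hc}\cdot s)(T_| - I_{\Hc}\cdot \bar{s})$, since $s + \bar s = 2\re(q)$ and $s\bar s = \re(q)^2 + |\Ima(q)|^2 = |q|^2$; note these two factors commute.

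First I would establish the claim that $\Delta_q(T)$ is invertible on $\Hi$ if and only if $\Delta_q(T)_|$ is invertible on $\Hi_\C$. For this, observe that $\Hi_\C$ and $\Hi_\C j$ are each invariant under $\Delta_q(T)$, with the action on each summand being "the same" map (the map on $\Hi_\C j$ being $yj \mapsto \Delta_q(T)_|(y)\,j$, which is conjugate via $y\mapsto yj$ to $\Delta_q(T)_|$ on $\Hi_\C$). Thus $\Delta_q(T)$ is bounded below on $\Hi$ iff $\Delta_q(T)_|$ is bounded below on $\Hi_\C$ (using $\|x+yj\|^2 = \|x\|^2+\|y\|^2$), and $\Delta_q(T)$ has dense range on $\Hi$ iff $\Delta_q(T)_|$ has dense range on $\Hi_\C$; invertibility is the conjunction of these two, both in the quaternionic and the complex setting. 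Next I would use the factorization: since the two commuting factors $T_| - I_{\Hc}\cdot s$ and $T_| - I_{\Hc}\cdot\bar s$ lie in $\B(\Hi_\C)$, their product is invertible iff both factors are invertible. Hence $q\in\sigma_S(T) \iff \Delta_q(T)_|$ not invertible $\iff s \in \sigma_\C(T)$ or $\bar s \in \sigma_\C(T)$.

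To conclude, I would show this last condition is equivalent to $q \in [\sigma_\C(T)]$. If $s\in\sigma_\C(T)$ then, since $s\in[q]$, certainly $q\in[s]\subseteq[\sigma_\C(T)]$; likewise if $\bar s\in\sigma_\C(T)$ then $q\in[\bar s]\subseteq[\sigma_\C(T)]$ because $[\bar s]=[s]=[q]$ (same real part, same modulus of imaginary part). Conversely, if $q\in[\lambda]$ for some $\lambda\in\sigma_\C(T)$, then $\re(\lambda)=\re(q)$ and $|\Ima(\lambda)| = |\Ima(q)| = |\Ima(s)|$, so $\lambda \in \{s,\bar s\}$, hence $s\in\sigma_\C(T)$ or $\bar s\in\sigma_\C(T)$, which gives $q\in\sigma_S(T)$ by the previous step. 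This proves $\sigma_S(T) = [\sigma_\C(T)]$.

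The main obstacle I anticipate is the careful bookkeeping in the first claim: verifying that invertibility of a right $\Hq$-linear operator that respects the real direct sum decomposition $\Hi = \Hi_\C\oplus\Hi_\C j$ is genuinely equivalent to invertibility of its restriction to $\Hi_\C$ as a right $\C$-linear operator. One must be attentive that $\Hi_\C j$ is not a $\C$-subspace in the naive sense (right multiplication by $i$ sends $yj$ to $yj\,i = y(-i)j = (-\bar{\,\cdot\,})$...), so the correct statement is that $y \mapsto yj$ intertwines $\Delta_q(T)_|$ (a real-linear, indeed $\C$-linear since its coefficients are real) with the action of $\Delta_q(T)$ on $\Hi_\C j$; since $\Delta_q(T)_|$ has real coefficients this intertwining is clean and no conjugation subtlety actually arises for $\Delta_q(T)_|$ itself, which is why factoring $\Delta_q$ rather than working with $T - I_{\Hc}\cdot q$ directly is the right move. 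Everything else is routine linear algebra over $\C$ together with the similarity criterion for quaternions recalled in Section 1.
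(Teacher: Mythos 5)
Your argument is correct, and it rests on the same two pillars as the paper's proof: the decomposition $\Hi=\Hi_{\C}\oplus\Hi_{\C}j$ reducing everything to $T_{|}$ on $\Hi_{\C}$, and the factorization $\Delta_q(T_{|})=(T_{|}-I_{\Hc}\cdot s)(T_{|}-I_{\Hc}\cdot \bar s)$, which is exactly the paper's identity $\Delta_\lambda(T)x=T(Tx-x\lambda)-(Tx-x\lambda)\lambda^*$. Where you genuinely diverge is in how you pass from invertibility of the product to invertibility of the factors: you invoke the standard ring-theoretic fact that a product of two \emph{commuting} bounded operators is invertible if and only if each factor is (left and right inverses of each factor being manufactured from $(AB)^{-1}$ and commutativity). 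The paper instead unpacks invertibility into the two conditions ``bounded below'' and ``dense range'' and runs a separate contrapositive argument for each, and in the converse inclusion it even detours through $T^*$ and the cited identity $\sigma_S(T^*)=\sigma_S(T)$ from [GMP]. Your packaging is shorter, symmetric in $s$ and $\bar s$, and avoids that external citation; the cost is only that you must justify (as you do) that invertibility of $\Delta_q(T)$ on $\Hi$ is equivalent to invertibility of $\Delta_q(T)_{|}$ on $\Hi_{\C}$, which follows cleanly from $\|x+yj\|^2=\|x\|^2+\|y\|^2$ and the fact that $y\mapsto yj$ intertwines $\Delta_q(T)_{|}$ with the action on $\Hi_{\C}j$ (no conjugation issue arises because $\Delta_q$ has real coefficients, as you correctly flag). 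The final bookkeeping identifying $\{s,\bar s\}=[q]\cap\C$ with membership in $[\sigma_{\C}(T)]$ matches the paper. No gaps.
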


\begin{proof}
In the proof we wil make use of the following equality
\begin{equation}\label{delta_fact}
  \Delta_\lambda(T)x=T(Tx-x\lambda)-(Tx-x\lambda)\lambda^*.
\end{equation}

First we show that $\sigma_S(T)\subseteq [\sigma_{\C}(T)]$. If $\lambda\in \sigma_S(T)$, then $\Delta_\lambda(T)$ is not invertible, that is, $\Delta_\lambda(T)$ is not bounded below or its range is not dense in $\Hi$.

We start by assuming that $\Delta_{\lambda}(T)$ is not bounded below. We will show that $T_{|}-I_{\Hc}\cdot\lambda$ or $T_{|}-I_{\Hc}\cdot\lambda^*$ are not bounded below by contrapositive. Suppose that $T_{|}-I_{\Hc}\cdot\lambda$ and $T_{|}-I_{\Hc}\cdot\lambda^*$ are bounded below. Then there exists $C, C'>0$ such that
\[
\|T_{|}x-x\lambda\|\geq C\|x\| \; \text{and} \; \|T_{|}x'-x'\lambda^*\|\geq C'\|x'\|,\; \forall x, x'\in \Hi_{\C}.
\]
Therefore, using (\ref{delta_fact}), we have
\[
\|\Delta_{\lambda}(T_{|})x\|=\|T_{|}(T_{|}x-x\lambda)-(T_{|}x-x\lambda)\lambda^*\|\geq C'\|T_{|}x-x\lambda\|\geq CC'\|x\|,
\]
and so $\Delta_{\lambda}(T_{|})$ is bounded below.

Now to see that $\Delta_{\lambda}(T)$ is also bounded below, take
$u=x+yj\in\Hi$ and using that $\Delta_\lambda(T)$ is complex we have
\begin{eqnarray}
 \|\Delta_\lambda(T)u\|^2  &=& \|\Delta_\lambda(T_{|})x+\Delta_\lambda(T_{|})yj\|^2 \\ \nonumber
   &=& \|\Delta_\lambda(T_{|})x\|^2+\|\Delta_\lambda(T_{|})yj\|^2 \\ \nonumber
   &\geq & (CC')^2  \left(\|x\|^2+\|y\|^2\right)\\ \label{delta_bb}
    &=& (CC')^2  \|u\|^2.
\end{eqnarray}
So we conclude that, when $\Delta_{\lambda}(T)$ is not bounded below, $T_{|}-I_{\Hc}\cdot\lambda$ or $T_{|}-I_{\Hc}\cdot\lambda^*$ are not bounded below and therefore $\lambda\in\sigma_{\C}(T)$ or $\lambda^*\in\sigma_{\C}(T^*)$.

Now, again by contrapositive, we will show that if $\Delta_\lambda(T)$ has not dense range in $\Hi$, then $T_{|}-I_{\Hc}\cdot\lambda$ or $T_{|}-I_{\Hc}\cdot\lambda^*$ have not dense range in $\Hi_{\C}$.
So suppose that $T_{|}-I_{\Hc}\cdot\lambda$ and $T_{|}-I_{\Hc}\cdot\lambda^*$ have dense range in $\Hi_{\C}$.
Take $z\in\Hi_{\C}$. Then for every $\varepsilon >0$ there is $y\in \Hi_{\C}$ such that $\|z-(T_{|}-I_{\Hc}\cdot\lambda^*)y\|<\varepsilon$. On the other hand, for such $y\in\Hi_{\C}$, there is $x\in\Hi_{\C}$ such that
\begin{equation}\label{dense}
 \|y-(T_{|}-I_{\Hc}\cdot\lambda)x\|<\varepsilon.
\end{equation}
We have, using (\ref{delta_fact}) and putting $\tilde{y}=T_{|}x-x\lambda$,
\begin{eqnarray*}
 \|z-\Delta_{\lambda}(T_{|})x\|  &=& \|z-(T_{|}-I_{\Hc}\cdot\lambda^*)\tilde{y}\|  \\
  &\leq & \|z-(T_{|}-I_{\Hc}\cdot\lambda^*)y\|+\|(T_{|}-I_{\Hc}\cdot\lambda^*)(\tilde{y}-y)\| \\
   &< & \varepsilon +  \|T_{|}-I_{\Hc}\cdot\lambda^*\| \|(\tilde{y}-y)\|  \\
   &< & \varepsilon (1+ \|T_{|}\|+|\lambda|)\;\;\;\; (\text{from (\ref{dense}})). \\
\end{eqnarray*}
Thus, $\Delta_{\lambda}(T_{|})$ has dense range. For $h\in\Hi$ such that $h=h_1+h_2j$, choose $x,y\in\Hi_\C$ such that $\Delta_{\lambda}(T_{|})x$ is close to $h_1$ and $\Delta_{\lambda}(T_{|})y$ is close to $h_2$. Since $\Delta_{\lambda}(T)(x+yj)=\Delta_{\lambda}(T_{|})x+ \Big(\Delta_{\lambda}(T_{|})y\Big)j$, we see that there is $u=x+yj\in\Hi$ such that $\Delta_{\lambda}(T)u$ is close to $h$.

To prove the converse inclusion, it is enough to show that $\sigma_{\C}(T)\subset \sigma_S(T)$.
Let $\lambda\in \sigma_{\C}(T)$ and suppose that $T_{|}-I_{\Hc}\cdot\lambda$ is not bounded below. From continuity of $T_{|}-I_{\Hc}\cdot\lambda^*$, given $\varepsilon>0$, there is $\delta>0$ such that  $\|(T_{|}-I_{\Hc}\cdot\lambda^*)y\|<\varepsilon$, whenever $\|y\|<\delta$. Since $T_{|}-I_{\Hc}\cdot\lambda$ is not bounded below, there is $x\in\Hi_{\C}\backslash\{0\}$ such that $\|(T_{|}-I_{\Hc}\cdot\lambda)x\|\leq \delta \|x\|$. Take $y=\frac{T_{|}x-x\lambda}{\|x\|}$. Then $\|y\|<\delta$ and $\Delta_{\lambda}(T_{|})\frac{x}{\|x\|}=T_{|}y-y\lambda^*$.
Therefore, since  $\|y\|<\delta$,
\[
\|T_{|}y-y\lambda^*\|<\varepsilon \Leftrightarrow \|\Delta_{\lambda}T_{|}x\|<\varepsilon\|x\|
\]
and so $\Delta_{\lambda}T_{|}$ is not bounded below. By a similar reasoning as before, we conclude that $\Delta_{\lambda}T$ is not bounded below.

On the other hand, if the range of $T_{|}-I_{\Hc}\cdot\lambda$ is not dense, that is, $\overline{\Ran (T_{|}-I_{\Hc}\cdot\lambda)}\neq \Hi_{\C}$, since $\overline{\Ran (T_{|}-I_{\Hc}\cdot\lambda)}=(\ker(T_{|}^*-I_{\Hc}\cdot\lambda^*))^\perp$,
we have that $\ker(T_{|}^*-I_{\Hc}\cdot\lambda^*)\neq \{0\}$. Therefore, $T_{|}^*-I_{\Hc}\cdot\lambda^*$ is not bounded below and, in particular, $\lambda^*\in\sigma_{\C}(T_{|}^*)$. Since $\Delta_{\lambda^*}T_{|}^*=\Delta_{\lambda}T_{|}^*$ and from the previous case
we conclude that $\Delta_{\lambda}T_{|}^*$ is not bounded below and $\Delta_{\lambda}T^*$ is also not bounded below. Therefore, $\lambda \in \sigma_S(T^*)$ and from \cite[Proposition 4.7]{GMP} we have $\lambda \in \sigma_S(T)$.

\end{proof}

As an illustration of the above result we compute the S-spectrum  of the quaternionic backward shift operator.

\begin{example}
Let $\Hi$ be the quaternionic Hilbert space $\ell^2$ of square summable sequences. Let $(e_n)_{n}$ be an orthonormal basis of $\ell^2(\N,\C)$ regarded as a basis of $\ell^2(\N,\Hq)$. Let $T\in\B(\ell^2)$ denote the backward shift operator, defined by $T(e_1)=0$ and $T(e_n)=e_{n-1}$ for all $n\geq 2$.
Since  $\sigma(T)=\overline{B_{\C}(0,1)}$ (see \cite[Example 2.3.2]{Mu}), it follows from Proposition \ref{Prop Sspectrum complex op} that $\sigma_S(T)=\overline{B_{\Hq}(0,1)}$. Here, $\overline{B_{\F}(0,1)}$ denotes the closed unit ball in $\F$.
\end{example}

Now we compute the quaternionic numerical range of the backward shift operator. This example shows that, contrary to the finite dimensional case,  $W(T)$ is not closed and therefore not compact, in general. Although this computation is similar to the complex case (\cite[Example 2]{GR}), we include it for the sake of completeness.

\begin{example}
Let $T\in\B(\ell^2)$ be the backward shift operator.
Since $\|T\|=1$, then  $W(T)\subseteq \overline{B(0,1)}$.
Clearly, $s=0$ is an eigenvalue of $T$ and for $0<|s|<1$, $x_{(s)}=\sum_{n\geq 1}e_n s^n\in\Hi$ is an eigenvector of $T$ for $s$, i.e, $Tx_{(s)}=x_{(s)}s$. Hence,
$
B(0,1)\subseteq\sigma_{r}(T)\subseteq W(T)\subseteq\overline{B(0,1)}.
$
To show that the quaternionic numerical range is the open unit ball, suppose that there was $\lambda \in W(T)$  such that $|\lambda|=1$. Then, there would be a unit vector $x\in \bS_{\Hi}$ with $\lambda =\langle Tx,x\rangle$. Since $\|T\|=1$ we would have
\[
1=|\lambda |=|\langle Tx,x\rangle|\leq \|Tx\|\|x\| \leq 1.
\]
The Cauchy-Schwarz inequality implies that  $Tx=x\lambda$.
Let $x=\sum_{n\geq 1} e_n a_n\in \ell^2$. It follows that
$
\sum_{n\geq 1} e_n a_{n+1}=\sum_{n\geq 1} e_n a_n \lambda.
$
Then $|a_{n}|=|a_1|$, for all $n\in \N$, which is impossible since $x\in \ell^2$ and therefore $\lambda$ cannot belong to $W(T)$. We conclude that $W(T)={B(0,1)}$.
\end{example}

Contrary to the finite dimensional case, where we always have $\sigma_S(T)=\sigma_r(T)\subseteq W(T)$ for every $n\times n$ matrix $T$, for infinite dimensional Hilbert spaces this is no longer true, as the above examples shows since $\sigma_S(T)=\overline{B(0,1)}$ but $W(T)=B(0,1)$. However, the result holds true for the closure of $W(T)$ for every operator $T\in\B(\Hi)$.

The next two results appeared first in \cite{MBB}. We include them with a proof for convenience of the reader.

\begin{theorem}\label{Theo_Sspecturm_NR}
Let $T\in \B(\Hi)$. Then $\sigma_S(T)\subseteq \overline{W(T)}$.
\end{theorem}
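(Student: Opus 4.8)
The plan is to fix $q\in\sigma_S(T)$ and produce a unit vector realizing $q$ (up to similarity) arbitrarily well in $W(T)$; since $[q]\subseteq W(T)$ whenever $q\in W(T)$, and similarly $[q]\subseteq\overline{W(T)}$ whenever $q\in\overline{W(T)}$, it suffices to show that some representative of $[q]$ lies in $\overline{W(T)}$. By definition, $q\in\sigma_S(T)$ means $\Delta_q(T)=T^2-2\re(q)T+|q|^2I$ is not invertible, i.e.\ either it is not bounded below or its range is not dense. I would first dispose of the easier case: if $\Ran(\Delta_q(T))$ is not dense, then $\ker(\Delta_q(T)^*)\neq\{0\}$, and one checks $\Delta_q(T)^*=\Delta_{q}(T^*)$ (since $\re(q)$ and $|q|^2$ are real), so $\Delta_q(T^*)$ has nontrivial kernel; this reduces that case to the bounded-below case applied to $T^*$, together with the fact that $\sigma_S(T^*)$ and $\sigma_S(T)$ relate via conjugation of similarity classes and $\overline{W(T^*)}=\overline{W(T)}^{\,*}$, which is closed under the passage to similarity classes.

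So the core is: assuming $\Delta_q(T)$ is not bounded below, find unit vectors $x$ with $\langle Tx,x\rangle$ approaching $[q]$. Choose a sequence of unit vectors $(x_n)$ with $\|\Delta_q(T)x_n\|\to 0$. Writing $y_n:=Tx_n - x_n q$ (a quaternion-valued ``residual''; note $x_nq$ makes sense as right scalar multiplication), the key algebraic identity to exploit is
\[
\Delta_q(T)x_n \;=\; T(Tx_n - x_n q) - (Tx_n - x_n q)q^* \;=\; Ty_n - y_n q^*,
\]
the same identity (\ref{delta_fact}) used in the proof of Proposition~\ref{Prop Sspectrum complex op}. The idea is that if $\|y_n\|$ does not go to $0$ along some subsequence, then after normalizing $z_n:=y_n/\|y_n\|$ one gets $\|Tz_n - z_n q^*\|\to 0$, so $\langle Tz_n,z_n\rangle - \langle z_nq^*,z_n\rangle\to 0$, i.e.\ $\langle Tz_n,z_n\rangle \to q^*$ (since $\langle z_nq^*,z_n\rangle = q^*\|z_n\|^2 = q^*$); hence $q^*\in\overline{W(T)}$ and therefore $[q^*]=[q]\subseteq\overline{W(T)}$. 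Otherwise $\|y_n\|\to 0$, meaning $\|Tx_n - x_n q\|\to 0$, so directly $\langle Tx_n,x_n\rangle \to q$ and $q\in\overline{W(T)}$. Either way a representative of $[q]$ lies in $\overline{W(T)}$, and since $\overline{W(T)}$ is a union of full similarity classes (because $W(T)$ is, and closure preserves this as $s\mapsto s^*xs$-type conjugation acts continuously on unit vectors, or more simply because similarity classes are compact and one can conjugate the approximating vectors), we conclude $q\in\overline{W(T)}$.

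The step I expect to be the main obstacle is handling the dichotomy $\|y_n\|\to 0$ versus $\|y_n\|\not\to 0$ cleanly, and in particular verifying that when we normalize $z_n = y_n/\|y_n\|$ the quantity $\langle Tz_n,z_n\rangle$ really converges to $q^*$ rather than merely staying bounded --- this needs the identity above plus the observation $\langle z_n q^*, z_n\rangle = q^*\langle z_n,z_n\rangle = q^*$, which uses the right-linearity conventions of the quaternionic inner product carefully. A secondary technical point is making precise that $\overline{W(T)}$ is invariant under similarity, so that obtaining \emph{some} representative of $[q]$ suffices; this follows because for any $q'\in\overline{W(T)}$ with approximating unit vectors $w_n$ (so $\langle Tw_n,w_n\rangle\to q'$) and any unit quaternion $s$, the vectors $w_n s$ are again unit vectors with $\langle T(w_ns),w_ns\rangle = s^*\langle Tw_n,w_n\rangle s \to s^*q's$, so $[q']\subseteq\overline{W(T)}$. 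Once this invariance is recorded, the argument above closes.
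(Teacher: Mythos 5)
Your proposal is correct and follows essentially the same route as the paper: the identity $\Delta_q(T)x=T(Tx-xq)-(Tx-xq)q^*$, Cauchy--Schwarz applied to the normalized residual to land at $q^*\in\overline{W(T)}$, and reduction of the non-dense-range case to the adjoint via $\ker(\Delta_q(T^*))\neq\{0\}$. Your explicit dichotomy between $\|y_n\|\to 0$ (where $\langle Tx_n,x_n\rangle\to q$ directly) and $\|y_n\|$ bounded away from $0$ along a subsequence is in fact a slightly more careful treatment of the normalization step than the paper's, which only separates out the exact case $Tx_n-x_nq=0$.
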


\begin{proof}
Let $q\in\sigma_S(T)$. Then $\Delta_q(T)$ is not invertible, that is, $\Delta_q(T)$ is not bounded
from below or $\Delta_q(T)$ does not have dense range.

First suppose that $\Delta_q(T)$ is not bounded from below. Then,
 there exists a sequence of vectors $x_n\in \bS_{\Hi}$ such that $\lim_{n \rightarrow\infty}\|\Delta_q(T)x_n\|=0$. If $Tx_n-x_nq=0$ then $q$ is a right eigenvalue of $T$ and therefore $q\in W(T)$. Suppose $Tx_n-x_nq\neq 0$.
Using the Cauchy-Schwarz inequality, we have
 \[
 |\langle \Delta_q(T)x_n, Tx_n-x_nq \rangle| \leq \|\Delta_q(T)x_n\|\, \|Tx_n-x_nq\|
 \]
and therefore
\[
\lim_{n \rightarrow\infty} \langle \Delta_q(T)x_n, Tx_n-x_nq \rangle=0.
\]
Taking into account that $\Delta_q(T)x_n$ can be written in the form
\[
\Delta_q(T)x_n=T(Tx_n-x_nq)-(Tx_n-x_nq)q^*,
\]
 we get
\[
\lim_{n \rightarrow\infty} \langle Ty_n-y_nq^*, y_n \rangle=0,
\]
where $y_n=\frac{Tx_n-x_nq}{\|Tx_n-x_nq\|}$.
So $\lim_{n \rightarrow\infty}\langle Ty_n, y_n \rangle - q^*=0$. As each  $\langle Ty_n, y_n \rangle\in W(T)$, then $q^*\in \overline{W(T)}$ and so $q\in\overline{W(T)}$.

Now suppose that   $\Delta_q(T)$ is bounded  from below but $\Delta_q(T)$ does  not have dense range. From \cite[Theorem 9.1.14]{CGK} we know that $\Ran(\Delta_q(T))^{\bot}=\ker (\Delta_q(T^*))$. So $\overline{\Ran(\Delta_q(T))}=\ker (\Delta_q(T^*))^{\bot}\neq\Hi$. Therefore $\ker (\Delta_q(T^*))\neq \{0\}$. From Proposition 3.1.9 of \cite{CGK} we have that $q$ is a right eigenvalue of $T^*$. Since $W(T^*)=W(T)$, we conclude that $q\in W(T)$.
\end{proof}


This result allow us to establish a relation between the S-spectral radius and the numerical radius of a bounded linear operator in $\Hi$.
Recall that, for $T \in \B(\Hi)$ the S-spectral radius of $T$  is defined to be the nonnegative real number
\[
r_S(T):=\sup\{|q|: q\in\sigma_S(T)\}.
\]
It is known that $r_S(T)\leq \|T\|$ (see \cite[Theorem 4.3]{GMP}).
The quaternionic numerical radius of $T\in\B(\Hi)$ is defined by
\[
\omega (T):=\sup \{|\lambda|: \lambda\in W(T)\}.
\]
The quaternionic numerical radius shares some properties of the complex numerical radius, namely:
\begin{equation}\label{propNradius}
  \omega(T+S)\leq \omega(T)+\omega(S)\quad \text{and} \quad \omega( T)=\omega( T^*),
\end{equation}
for $T, S\in \B(\Hi)$. However, $\omega(\cdot)$ fails to be a norm even for finite dimensional quaternionic Hilbert space (see \cite{CDM5}). But we still have the following relation between the spectral radius, numerical radius and the operator norm.

\begin{corollary}
Let $T\in \B(\Hi)$. Then
\[
r_S(T)\leq\omega (T)\leq\|T\|\leq 2 \omega (T).
\]
\end{corollary}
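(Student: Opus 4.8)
The plan is to prove the chain of inequalities $r_S(T)\leq\omega(T)\leq\|T\|\leq 2\omega(T)$ by establishing each of the three inequalities separately, as they are essentially independent.

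First I would prove $r_S(T)\leq\omega(T)$. This is an immediate consequence of Theorem \ref{Theo_Sspecturm_NR}, which gives $\sigma_S(T)\subseteq\overline{W(T)}$. Taking suprema of $|q|$ over $q\in\sigma_S(T)$ and over $q\in\overline{W(T)}$, and noting that $\sup\{|\lambda|:\lambda\in\overline{W(T)}\}=\sup\{|\lambda|:\lambda\in W(T)\}=\omega(T)$ since the modulus is continuous, we obtain $r_S(T)\leq\omega(T)$.

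Next, the inequality $\omega(T)\leq\|T\|$ follows directly from Cauchy--Schwarz: for any $x\in\bS_\Hi$ we have $|\langle Tx,x\rangle|\leq\|Tx\|\,\|x\|\leq\|T\|$, so taking the supremum over unit vectors gives $\omega(T)\leq\|T\|$. (This was in fact already used implicitly in the backward shift example.) The only genuinely substantive step is the last inequality $\|T\|\leq 2\omega(T)$. The plan here is to mimic the classical complex-Hilbert-space argument. For a unit vector $x$ with $Tx\neq 0$, set $y = Tx/\|Tx\|$; then $\langle Tx,y\rangle = \|Tx\|$, which is a nonnegative real. Using the polarization-type identity $4\re\langle Tx,y\rangle = \langle T(x+y),x+y\rangle - \langle T(x-y),x-y\rangle$ (taking real parts of quaternionic inner products), and the fact that $\langle Tu,u\rangle\in W(T)$ so that $|\re\langle Tu,u\rangle|\leq|\langle Tu,u\rangle|\leq\omega(T)\|u\|^2$, we bound $4\|Tx\| = 4\re\langle Tx,y\rangle \leq \omega(T)(\|x+y\|^2+\|x-y\|^2) = 2\omega(T)(\|x\|^2+\|y\|^2) = 4\omega(T)$ by the parallelogram law. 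Taking the supremum over unit $x$ yields $\|T\|\leq 2\omega(T)$.

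The main obstacle, and the only place requiring care in the quaternionic setting, is verifying that the polarization identity and parallelogram law hold for the real part of the $\Hq$-valued inner product. Since $\re\langle\cdot,\cdot\rangle$ defines a real inner product on $\Hi$ viewed as a real vector space (properties $(i)$--$(iii)$ of the hermitian inner product restrict appropriately, and $\re\langle x,y\rangle=\re\langle y,x\rangle$), both identities are valid; I would state this briefly and refer to the relation $\|x\|^2=\langle x,x\rangle=\re\langle x,x\rangle$ together with (\ref{relating norms})-style bilinearity. One must also note that $\langle T(x\pm y),x\pm y\rangle$ need not be real, but only its real part enters, and $|\re z|\leq|z|$ for $z\in\Hq$ keeps the estimate intact. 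With these observations in place the argument closes exactly as in the complex case.
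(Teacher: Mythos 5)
Your first two inequalities are exactly the paper's argument ($\sigma_S(T)\subseteq\overline{W(T)}$ from Theorem \ref{Theo_Sspecturm_NR}, then Cauchy--Schwarz) and are fine. The gap is in $\|T\|\leq 2\omega(T)$: the polarization identity you invoke is false. Expanding,
\[
\langle T(x+y),x+y\rangle-\langle T(x-y),x-y\rangle=2\langle Tx,y\rangle+2\langle Ty,x\rangle,
\]
and since $\re\langle Ty,x\rangle=\re\big(\langle x,Ty\rangle^*\big)=\re\langle x,Ty\rangle=\re\langle T^*x,y\rangle$, the real part of the right-hand side is $2\re\langle (T+T^*)x,y\rangle$, not $4\re\langle Tx,y\rangle$. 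Polarizing the real quadratic form $u\mapsto\re\langle Tu,u\rangle$ can only recover the \emph{symmetric} part of the associated real bilinear form. With $y=Tx/\|Tx\|$ your estimate therefore yields $\|Tx\|+\re\langle T^2x,x\rangle/\|Tx\|\leq 2\omega(T)$, which is vacuous when $\re\langle T^2x,x\rangle$ is sufficiently negative. This is precisely the obstruction that makes $\|T\|\leq 2w(T)$ fail over real Hilbert spaces (rotation by $\pi/2$ in $\R^2$ has $w(T)=0$ but $\|T\|=1$); the complex proof escapes it only by also polarizing with $x\pm iy$, a step with no counterpart in your real-part argument.

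The inequality does hold in the quaternionic setting, but for a reason your proof never touches: for the anti-self-adjoint part $S=\tfrac12(T-T^*)$ one has $\re\langle Sx,x\rangle=0$, yet $\langle Sx,x\rangle$ is a generally nonzero \emph{pure quaternion} and in fact $\omega(S)=\|S\|$. The paper's proof runs through exactly this point: write $\|T\|\leq\tfrac12\|T+T^*\|+\tfrac12\|T-T^*\|$, apply Ramesh's theorem \cite[Theorem 3.3]{R2} ($\|N\|=\omega(N)$ for quaternionic normal $N$) to the normal operators $T\pm T^*$, and conclude with $\omega(T\pm T^*)\leq\omega(T)+\omega(T^*)=2\omega(T)$. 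To close your argument you would need either this input or a genuinely quaternionic polarization formula that isolates $\langle Tx,y\rangle$ itself rather than its symmetrization.
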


\begin{proof}
It follows from the previous result and from the Cauchy-Schwarz inequality that, for any $T\in \B(\Hi)$, $
r_S(T)\leq\omega (T)\leq\|T\|.$
It remains to see that $\|T\|\leq 2 \omega (T)$. In fact, we can write
\[
\|T\|\leq\frac{\|T+T^*\|}{2}+\frac{\|T-T^*\|}{2}
\]
and since $T+T^*$ and $T-T^*$ are normal operators, from \cite[Theorem 3.3]{R2} we have
\[
\|T\|\leq\frac{1}{2}\omega({T+T^*})+\frac{1}{2}\omega({T-T^*}).
\]
From (\ref{propNradius}) and since $\omega(\alpha T)=|\alpha|\omega (T)$, for $\alpha\in \R$, we conclude that
$\|T\|\leq 2\omega({T})$.
\end{proof}

\section{Quaternionic numerical range of a complex operator}

In this Section, we consider $T\in\B(\Hi)$ to be a complex operator as in definition \ref{def complex op}.
 Every such operator can be uniquely written as a sum of a self-adjoint operator and an anti-self-adjoint operator
 \begin{eqnarray}
   T &=&  \frac{1}{2}(T+T^*)+\frac{1}{2}(T-T^*)\label{opTcomplex}\\
    &=& \widetilde{H}+\widetilde{S}, \nonumber
 \end{eqnarray}
with $\widetilde{H}$ and $\widetilde{S}$ complex normal operators.

%

We aim to characterize the quaternionic numerical range of a complex operator. By similarity of the elements of $W(T)$, it is enough to
 characterize the bild. From this, we obtain the shape of the upper bild in terms of the complex numerical range and two reals values, the infimum and supremum of the real elements in que quaternionic numerical range. The finite dimensional case was treated in \cite{CDM5}.
\begin{proposition}\label{prop bild complex op}
Let $T\in\B(\Hi)$ be a complex operator. Then,
\[
B(T)=\{\langle Tx,x\rangle+\langle T^*y,y\rangle:(x,y)\in\bS_{\Hi_{\C}^2},\langle (T-T^*)y,x\rangle=0\}.
\]
\end{proposition}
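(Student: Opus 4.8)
The plan is to compute $\langle Tu,u\rangle$ for an arbitrary unit vector $u\in\bS_{\Hi}$, decompose $u$ according to \eqref{decomposition H}, and then extract the condition under which this quaternion is complex. Write $u=x+yj$ with $x,y\in\Hi_{\C}$ and $\|x\|^2+\|y\|^2=1$ by \eqref{relating norms}. Using that $T$ is complex, $Tu=T_{|}(x)+T_{|}(y)j$, so I must expand $\langle T_{|}(x)+T_{|}(y)j,\ x+yj\rangle$ in terms of the $\C$-inner product $\langle\cdot,\cdot\rangle_{\C}$ on $\Hi_{\C}$. The key algebraic facts are that for $a,b\in\Hi_{\C}$ one has $\langle a,b\rangle=\langle a,b\rangle_{\C}$, $\langle aj,bj\rangle=\langle b,a\rangle_{\C}=\overline{\langle a,b\rangle_{\C}}$, and $\langle a,bj\rangle=\langle aj,b\rangle j^{*}$-type cross terms lie in $\C j$. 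Carrying this out, the real-coefficient-of-$1$ and coefficient-of-$i$ part of $\langle Tu,u\rangle$ should assemble into $\langle T_{|}x,x\rangle_{\C}+\overline{\langle T_{|}y,y\rangle_{\C}}=\langle Tx,x\rangle+\langle T^{*}y,y\rangle$ (using $\langle T^{*}y,y\rangle=\overline{\langle Ty,y\rangle}$ on $\Hi_{\C}$), while the $j$-$k$ part collects into a term proportional to the cross inner products between $T_{|}x$, $T_{|}y$ and $x,y$.

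Next I would identify exactly when the $\bP$-part lying in $\spn_{\R}\{j,k\}$ vanishes: this is precisely the condition that makes $\langle Tu,u\rangle\in\C$. The expectation is that the offending term is (a scalar multiple of) $\big(\langle T_{|}x,y\rangle_{\C}-\overline{\langle T_{|}y,x\rangle_{\C}}\big)j$, or after rearrangement $\langle(T-T^{*})y,x\rangle\,j$, so that the $\C$-valued coefficient must be zero. This gives the constraint $\langle(T-T^{*})y,x\rangle=0$ appearing in the statement. Thus one inclusion is: every $q\in B(T)$ arises from a unit vector $u=x+yj$, and complexity of $q$ forces the constraint, and then $q=\langle Tx,x\rangle+\langle T^{*}y,y\rangle$. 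Conversely, given $(x,y)\in\bS_{\Hi_{\C}^{2}}$ with $\langle(T-T^{*})y,x\rangle=0$, the vector $u=x+yj$ is a unit vector in $\Hi$ and the same computation shows $\langle Tu,u\rangle=\langle Tx,x\rangle+\langle T^{*}y,y\rangle\in\C$, hence lies in $B(T)$.

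The main obstacle I anticipate is the bookkeeping of the quaternionic inner product across the decomposition $\Hi=\Hi_{\C}\oplus\Hi_{\C}j$: one must be careful that $j$ does not commute with complex scalars ($j\lambda=\bar\lambda j$), so terms like $\langle T_{|}(y)j, yj\rangle$ produce a conjugation, and the mixed terms $\langle T_{|}(x),yj\rangle$ and $\langle T_{|}(y)j,x\rangle$ must be tracked as elements of $\Hi_{\C}j$ paired appropriately. Getting the precise combination that collapses to $\langle(T-T^{*})y,x\rangle$ — rather than, say, $\langle(T-T^{*})x,y\rangle$ or a conjugate — is the one place where a sign or conjugation slip would change the statement, so I would verify it by testing on $\Hi_{\C}=\C$ (the one-dimensional case) where everything is explicit. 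Once the identity $\langle Tu,u\rangle = \langle Tx,x\rangle+\langle T^{*}y,y\rangle + \langle(T-T^{*})y,x\rangle\,j$ (up to the exact form of the last term) is established, both inclusions follow immediately and the proof is complete.
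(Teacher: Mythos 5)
Your proposal is correct and follows essentially the same route as the paper: expand $\langle T(x+yj),x+yj\rangle$ using the decomposition $\Hi=\Hi_{\C}\oplus\Hi_{\C}j$ and the rule $j\lambda=\lambda^{*}j$, arrive at the identity $\langle Tu,u\rangle=\langle Tx,x\rangle+\langle T^{*}y,y\rangle+\langle(T-T^{*})y,x\rangle j$, and observe that the last term lies in $\mathrm{span}_{\R}\{j,k\}$ so membership in $B(T)$ is equivalent to its vanishing. The one caveat you flag yourself is real but harmless: the cross term is $\langle Ty,x\rangle-\langle Tx,y\rangle^{*}=\langle (T-T^{*})y,x\rangle$ (not the $x\leftrightarrow y$-swapped version you first wrote), exactly as in the paper.
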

\begin{proof}
For an element $\omega$ in $W(T)$, there is $u=x+yj\in\bS_{\Hi}$ such that
\begin{eqnarray*}
  \omega &=& \langle Tu,u\rangle \\
   &=& \langle T(x+yj),(x+yj)\rangle\\
   &=& \langle Tx,x\rangle -j\langle Tx,y\rangle+\langle Ty,x\rangle j - j\langle Ty,y\rangle j.
\end{eqnarray*}

Since $\langle Tx,y\rangle, \langle Ty,y\rangle\in\C$, it follows from \cite[Theorem 2.1]{Zh} that
\begin{eqnarray*}
  \omega &=& \langle Tx,x\rangle -\langle Tx,y\rangle^* j+\langle Ty,x\rangle j +\langle Ty,y\rangle^* \\
   &=& \langle Tx,x\rangle +\langle y,Ty\rangle+\left(\langle Ty,x\rangle-\langle T^*y,x\rangle\right)j \\
   &=& \langle Tx,x\rangle +\langle T^*y,y\rangle+\langle (T- T^*)y,x\rangle j.
\end{eqnarray*}
Since $T-T^*$ is a complex operator and $x,y\in\Hi_{\C}$, then $\langle (T- T^*)y,x\rangle j\in\Span\{j,k\}$.
Hence, $\omega\in B(T)$ if, and only if, $\langle (T- T^*)y,x\rangle=0$ and $(x,y)\in\bS_{\Hi_\C^2}$,
and we conclude that $\omega=  \langle Tx,x\rangle +\langle T^*y,y\rangle$.
\end{proof}
From Proposition \ref{prop bild complex op}, an element $w\in B(T)$ is of the form
\begin{eqnarray*}
  w &=& \langle Tx,x\rangle+\langle T^*y,y\rangle \\
   &=& \alpha^2 \langle Tx_{\bS},x_{\bS}\rangle+(1-\alpha^2)\langle T^*y_{\bS},y_{\bS}\rangle,
\end{eqnarray*}
where $(x,y)\in\bS_{\Hi_{\C}^2}$, $x=\|x\|x_{\bS}, y=\|y\|y_{\bS}$ and $\alpha^2=\|x\|^2$. It follows that
\[
B(T)\subset\mathrm{conv}\,\{W_{\C}(T),W_{\C}(T^*)\},
\]
where $W_{\C}(T)$ denotes the complex numerical range of $T$.
An immediate consequence is that if ${W_{\C}(T)}={W_{\C}(T^*)}$ then ${B(T)}={W_{\C}(T)}$. The converse is also true. In fact, if ${B(T)}={W_{\C}(T)}$, using that $(W_\C(T))^*=W_\C(T^*)$, we have $(B(T))^*=W_\C(T^*)$. Therefore, $B(T)=W_\C(T^*)$ and so  ${W_{\C}(T)}={W_{\C}(T^*)}$. In addition, from Toeplitz-Hausdorff Theorem we conclude that $B(T)$ is convex.
Therefore we can establish a result that also holds for finite dimensional Hilbert space, see \cite[Corollary 3.8]{CDM5}).

\begin{corollary}
Let  $T\in\B(\Hi)$ be a complex operator. Then $W_{\C}(T)=W_{\C}(T^*)$ if, and only if, $B(T)=W_{\C}(T)$. Moreover, $B(T)$ is convex.
\end{corollary}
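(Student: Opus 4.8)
The plan is to combine the containment $B(T)\subset\mathrm{conv}\,\{W_{\C}(T),W_{\C}(T^*)\}$, which was derived from Proposition \ref{prop bild complex op}, with two elementary facts: the reverse containments $W_{\C}(T)\subseteq B(T)$ and $W_{\C}(T^*)\subseteq B(T)$, and the conjugation symmetry $B(T)^*=B(T)$.

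First I would record the reverse containments. Given a unit vector $x$ in $\Hi_{\C}$, the pair $(x,0)$ lies in $\bS_{\Hi_{\C}^2}$ and trivially satisfies $\langle (T-T^*)0,x\rangle=0$, so Proposition \ref{prop bild complex op} gives $\langle Tx,x\rangle=\langle Tx,x\rangle+\langle T^*0,0\rangle\in B(T)$; since $T$ is a complex operator, $\langle Tx,x\rangle$ coincides with the complex value $\langle Tx,x\rangle_{\C}$ defining $W_{\C}(T)$, whence $W_{\C}(T)\subseteq B(T)$, and taking $(0,y)$ gives $W_{\C}(T^*)\subseteq B(T)$ in the same way. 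Next, $B(T)$ is closed under quaternionic conjugation: since $q\in W(T)$ is equivalent to $[q]\subseteq W(T)$ and $q^*\in[q]$, we have $W(T)^*=W(T)$, hence $B(T)^*=(W(T)\cap\C)^*=W(T)\cap\C=B(T)$; equivalently one may invoke $W_{\C}(T)^*=W_{\C}(T^*)$.

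With these in hand the equivalence is immediate. If $W_{\C}(T)=W_{\C}(T^*)$, then
\[
W_{\C}(T)\subseteq B(T)\subset\mathrm{conv}\,\{W_{\C}(T),W_{\C}(T^*)\}=\mathrm{conv}\,\{W_{\C}(T)\}=W_{\C}(T),
\]
where the last equality holds because $W_{\C}(T)$ is convex by the Toeplitz--Hausdorff theorem; hence $B(T)=W_{\C}(T)$. Conversely, if $B(T)=W_{\C}(T)$, then applying conjugation and using $B(T)^*=B(T)$ and $W_{\C}(T)^*=W_{\C}(T^*)$ yields $W_{\C}(T^*)=B(T)=W_{\C}(T)$. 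Finally, under either (hence both) of these equivalent conditions $B(T)=W_{\C}(T)$, which is convex by Toeplitz--Hausdorff, establishing the ``moreover'' part.

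The argument is essentially bookkeeping on top of Proposition \ref{prop bild complex op}; the only points that deserve a line of care are the identification of $\langle Tx,x\rangle$ (the quaternionic form restricted to $\Hi_{\C}$) with the complex form defining $W_{\C}(T)$, which is precisely where the hypothesis that $T$ is a complex operator enters, and the observation that passing to the convex hull of an already convex set does nothing.
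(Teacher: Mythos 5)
Your proof is correct and follows essentially the same route as the paper: the containment $B(T)\subset\mathrm{conv}\,\{W_{\C}(T),W_{\C}(T^*)\}$ from Proposition \ref{prop bild complex op}, Toeplitz--Hausdorff for the forward direction, and the conjugation symmetry $(B(T))^*=B(T)$ together with $(W_{\C}(T))^*=W_{\C}(T^*)$ for the converse. You merely make explicit two steps the paper leaves implicit (the containments $W_{\C}(T),W_{\C}(T^*)\subseteq B(T)$ via the pairs $(x,0)$ and $(0,y)$), and your reading of the ``moreover'' clause as holding under the stated equivalent conditions matches what the paper's argument actually establishes.
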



Next result characterizes the upper bild of a complex operator $T\in\B(\Hi)$ in terms of the (upper) complex numerical ranges of $T$ and $T^*$, and two real numbers $\underline{v}$ and $\overline{v}$. In fact,
since $B(T)\cap\R$ is a bounded, convex set and thus an interval in $\R$, we may define
\begin{eqnarray*}
  \underline{v} & = & \mathrm{inf}\,B(T)\cap\R \\
  \overline{v} & = & \mathrm{sup}\,B(T)\cap\R.
\end{eqnarray*}

\begin{theorem}\label{theo upper bild}
Let $T\in\B(\Hi)$ be a complex operator. Then,
\[
\overline{B^+(T)}=\mathrm{conv}\,\{\overline{W^+_{\C}(T)},\overline{W^+_{\C}(T^*)},\underline{v},\overline{v}\},
\]
where $\underline{v}=\mathrm{inf}\,B(T)\cap\R$ and $\overline{v}=\mathrm{sup}\,B(T)\cap\R$.
\end{theorem}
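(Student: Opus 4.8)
The plan is to prove the two inclusions separately, using the description of the bild from Proposition \ref{prop bild complex op} together with the fact (established just above the theorem) that $B(T) \subseteq \mathrm{conv}\{W_{\C}(T), W_{\C}(T^*)\}$ and that $B^+(T)$ is convex.

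\emph{The inclusion $\overline{B^+(T)} \subseteq \mathrm{conv}\{\overline{W^+_{\C}(T)}, \overline{W^+_{\C}(T^*)}, \underline{v}, \overline{v}\}$.} Take $w \in B^+(T)$. By Proposition \ref{prop bild complex op} there is $(x,y) \in \bS_{\Hi_{\C}^2}$ with $\langle (T-T^*)y, x\rangle = 0$ and $w = \langle Tx, x\rangle + \langle T^*y, y\rangle$. Writing $\alpha^2 = \|x\|^2$ as in the paragraph following Proposition \ref{prop bild complex op}, we get $w = \alpha^2 a + (1-\alpha^2) b$ with $a = \langle Tx_{\bS}, x_{\bS}\rangle \in W_{\C}(T)$ and $b = \langle T^*y_{\bS}, y_{\bS}\rangle \in W_{\C}(T^*)$ (when $x$ or $y$ is zero, only one term survives, but the argument is the same). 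Now $a = \re(a) + i\,\Im(a)$ and similarly for $b$; since $W_{\C}(T)$ is closed under conjugation of its imaginary part only if $T$ is... actually, the key point is simpler: each of $a$ and $b$ lies in $W_{\C}(T)$ resp. $W_{\C}(T^*)$, and replacing $a$ by $a^* \in W_{\C}(T^*)$ (which holds because $W_{\C}(T)^* = W_{\C}(T^*)$) we may assume $\Im(a) \geq 0$ and $\Im(b) \geq 0$, so $a \in W^+_{\C}(T) \cup W^+_{\C}(T^*)$ and likewise $b$; thus $w$ is a convex combination of a point in $\overline{W^+_{\C}(T)} \cup \overline{W^+_{\C}(T^*)}$ and a point in $\overline{W^+_{\C}(T)} \cup \overline{W^+_{\C}(T^*)}$, hence lies in $\mathrm{conv}\{\overline{W^+_{\C}(T)}, \overline{W^+_{\C}(T^*)}\}$, which is contained in the right-hand side. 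Taking closures gives the inclusion for $\overline{B^+(T)}$. (The points $\underline{v}, \overline{v}$ are not even needed for this direction, since they belong to $B(T)\cap\R \subseteq B^+(T)$ anyway.)

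\emph{The reverse inclusion.} Here one must show $\overline{W^+_{\C}(T)} \subseteq \overline{B^+(T)}$, $\overline{W^+_{\C}(T^*)} \subseteq \overline{B^+(T)}$, and $\underline{v}, \overline{v} \in \overline{B^+(T)}$; convexity of $B^+(T)$ (the Theorem in Section 1) then finishes the job after passing to closures. The last point is immediate since $\underline v, \overline v$ are by definition endpoints of the interval $B(T)\cap\R \subseteq B^+(T)$. For the first, take $a \in W^+_{\C}(T)$, say $a = \langle Tx_{\bS}, x_{\bS}\rangle$ with $\|x_{\bS}\| = 1$; choosing $y = 0$ in Proposition \ref{prop bild complex op} the constraint $\langle (T-T^*)y, x\rangle = 0$ is satisfied trivially, and we get $a = \langle T x_{\bS}, x_{\bS}\rangle + \langle T^* 0, 0\rangle \in B(T)$; since $\Im(a)\ge 0$, $a \in B^+(T)$. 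The same with $x=0$ handles $W^+_{\C}(T^*)$. Taking closures gives $\overline{W^+_{\C}(T)}, \overline{W^+_{\C}(T^*)} \subseteq \overline{B^+(T)}$, and then $\mathrm{conv}\{\overline{W^+_{\C}(T)}, \overline{W^+_{\C}(T^*)}, \underline v, \overline v\} \subseteq \overline{\mathrm{conv}\, B^+(T)} = \overline{B^+(T)}$.

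\emph{Main obstacle.} The routine-looking issue that needs care is the handling of imaginary-part signs in the first inclusion: a convex combination $\alpha^2 a + (1-\alpha^2) b$ of two complex numbers with $\Im(a), \Im(b) \geq 0$ automatically has nonnegative imaginary part, so it is consistent to pick the $+$-half representatives; but one must justify that $a \mapsto a^*$ moves a point of $W_{\C}(T)$ to $W_{\C}(T^*)$ and vice versa, and that the constraint $\langle (T-T^*)y, x\rangle = 0$ from Proposition \ref{prop bild complex op} is compatible with these substitutions — concretely, that replacing $(x,y)$ by a new pair realizing the conjugated data still satisfies the orthogonality constraint. In the degenerate situations ($x=0$ or $y=0$, or $w$ real) the constraint is vacuous, and for $w$ non-real one of $\Im(a), \Im(b)$ is strictly positive and the substitution argument can be run on that coordinate; making this bookkeeping airtight is the only delicate part. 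Everything else is a direct appeal to Proposition \ref{prop bild complex op}, the inclusion $B(T) \subseteq \mathrm{conv}\{W_{\C}(T), W_{\C}(T^*)\}$, convexity of $B^+(T)$, and the Toeplitz–Hausdorff convexity of $W_{\C}(T)$.
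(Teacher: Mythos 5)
Your ``reverse inclusion'' (that $\mathrm{conv}\{\overline{W^+_{\C}(T)},\overline{W^+_{\C}(T^*)},\underline{v},\overline{v}\}\subseteq\overline{B^+(T)}$) is correct and is essentially the paper's argument: put $y=0$ or $x=0$ in Proposition \ref{prop bild complex op} and invoke convexity of the upper bild. The problem is the other inclusion. You write $w=\alpha^2 a+(1-\alpha^2)b$ with $a\in W_{\C}(T)$, $b\in W_{\C}(T^*)$, and then say that ``replacing $a$ by $a^*\in W_{\C}(T^*)$ we may assume $\Im(a)\geq 0$.'' This substitution destroys the identity: $\alpha^2 a^*+(1-\alpha^2)b\neq w$ unless $a$ is real, so you cannot assume both representatives lie in the upper half-plane. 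The conclusion you draw --- that $B^+(T)\subseteq\mathrm{conv}\{\overline{W^+_{\C}(T)},\overline{W^+_{\C}(T^*)}\}$, with $\underline{v},\overline{v}$ ``not even needed'' --- is false. For instance, for the diagonal normal operator with eigenvalues $i$ and $2i$ one has $\overline{W^+_{\C}(T)}=[i,2i]$ and $W^+_{\C}(T^*)=\emptyset$, yet taking $x=(\sqrt{2/3},0)$, $y=(0,\sqrt{1/3})$ in Proposition \ref{prop bild complex op} gives $0\in B^+(T)$, which is not in $[i,2i]$. This is precisely why the theorem must include $\underline{v}$ and $\overline{v}$.

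The missing idea, which is the heart of the paper's proof, is this: when $a=\langle Tz_1,z_1\rangle$ has negative imaginary part and $b=\langle T^*z_2,z_2\rangle$ has positive imaginary part (with $\langle (T-T^*)z_2,z_1\rangle=0$), the \emph{entire segment} $[a,b]$ lies in $\overline{B(T)}$, because for each $\beta\in[0,1]$ the pair $x=\sqrt{\beta}\,z_1$, $y=\sqrt{1-\beta}\,z_2$ still satisfies the orthogonality constraint (it is bilinear) and realizes $\beta a+(1-\beta)b$ as a point of the bild. Hence the point $r=[a,b]\cap\R$ belongs to $B(T)\cap\R\subseteq[\underline{v},\overline{v}]$, and since $w$ lies on the upper portion $[r,b]$ of that segment, $w\in\mathrm{conv}\{\overline{W^+_{\C}(T^*)},\underline{v},\overline{v}\}$. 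This case analysis on the signs of $\Im(a)$ and $\Im(b)$ (together with a compactness/subsequence argument for $w\in\overline{B^+(T)}$ rather than $w\in B^+(T)$) is what you need to replace your conjugation step; you flagged the sign bookkeeping as the ``only delicate part,'' but as it stands the proposed fix does not work and the gap is essential.
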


\begin{proof}
From $W_{\C}(T)\subseteq W_{\Hq}(T)$ we have that $W_{\C}^+(T)\subseteq B^+(T)$. On the other hand, from $(W_{\Hq}(T))^*=W_{\Hq}(T)$
we have $W_{\C}^+(T^*)\subseteq B^+(T)$. Since the closure of the upper bild $\overline{B^+(T)}$ is convex and contains $\underline{v}$ and $\overline{v}$, we conclude that
\[
\mathrm{conv}\,\{\overline{W_{\C}^+(T)},\overline{W_{\C}^+(T^*)},\underline{v},\overline{v}\}\subseteq \overline{B^+(T)}.
\]

To prove the converse, let $w\in \overline{B^+(T)}$. Then, $w=\lim_k w_k$, for some sequence $(w_k)_k$ in $B^+(T)$. Since $w_k\in B^+(T)$ for every $k\in\N$, from Proposition \ref{prop bild complex op} we know that for some $(x_k,y_k)\in\bS_{\Hi_{\C}^2}$ such that $\langle (T-T^*)y_k,x_k\rangle=0$ we have
\begin{eqnarray*}
 w_k  &=& \langle Tx_k,x_k\rangle+\langle T^*y_k,y_k\rangle \\
   &=& \|x_k\|^2 \langle Tx_{{\bS},k},x_{{\bS},k}\rangle+\|y_k\|^2 \langle T^*y_{{\bS},k},y_{{\bS},k}\rangle\\
   &=&  \alpha_k\omega_{1,k}+(1-\alpha_k)\omega_{2,k},
\end{eqnarray*}
with $\alpha_k\in[0,1]$, $w_{1,k}=\langle T x_{{\bS},k},x_{{\bS},k}\rangle$ and $w_{2,k}=\langle T^* y_{{\bS},k},y_{{\bS},k}\rangle$.

Note that $(w_{1,k})_k\subset \overline{W_{\C}(T)}$ and  $(w_{2,k})_k\subset \overline{W_{\C}(T^*)}$. Hence, we have convergent subsequences, say,
$(w_{1,k_m})_m\to w^1$, $(w_{2,k_m})_m\to w^2$ and $(\alpha_{k_m})_m\to \alpha$.
Thus, $w_{k_m}=\alpha_{k_m}w_{1,k_m}+(1-\alpha_{k_m})w_{2,k_m}$ converges to $w=\alpha w^1+(1-\alpha)w^2$. In other words, considering if necessary subsequences, we see that $w$ is a convex combination of $w^1=\lim_k w_{1,k}$ and $w^2=\lim_k w_{2,k}$. At this point, the proof split into three cases.

\textbf{Case 1.} If $(w_{1,k})_k$ lies in $\overline{W_{\C}^+(T)}$ and $(w_{2,k})_k$ lies in $\overline{W_{\C}^+(T^*)}$ then  $w_k$ lies in $\mathrm{conv}\,\{\overline{W^+_{\C}(T)},\overline{W^+_{\C}(T^*)},\underline{v},\overline{v}\}$, and so does $w=\lim_k w_k$. This follows from the fact that the convex hull of bounded closed sets in $\C$ is closed.

\textbf{Case 2.} Suppose $(w_{1,k})_k$ lies in $\overline{W_{\C}^-(T)}$ and $(w_{2,k})_k$ lies in $\overline{W_{\C}^+(T^*)}$.
Take a subsequence $(w_{1,k},w_{2,k})_k$ such that $\{w_{1,k}, w_{2,k}\}\nsubseteq\R$. If such subsequence does not exist it means that, for a certain $p\in\mathbb{N}$, we have that $w_{1,k},w_{2,k}\in\R$, for every $k>p$. Note that, since $\C^-\cap\R=\C^+\cap\R$ we have $\overline{W_{\C}^-(T)}\cap\R=\overline{W_{\C}(T)}\cap\R=\overline{W_{\C}^+(T)}\cap\R$
and therefore, when $w_{1,k}$ lie in $\R$ necessarily $w_{1,k}\in\overline{W_{\C}^+(T)}\cap\R$. It follows that $w_{1,k},w_{2,k}\in\overline{W_{\C}^+(T)}$ for $k>p$. This was treated in case 1. On the other hand, when such subsequence exists, which we still denote by $(w_{1,k},w_{2,k})_k$ for simplicity, let
$r_k=[w_{1,k},w_{2,k}]\cap\R$ for every $k$.
Since $w_k\in[w_{1,k},w_{2,k}]$ is an element of the upper bild, then $w_k\in[r_k,w_{2,k}]$. We observe that $[w_{1,k},w_{2,k}]$ is contained in the bild $\overline{B(T)}$. In fact, an element of $[w_{1,k},w_{2,k}]$ is of the form $\alpha_kw_{1,k}+(1-\alpha_k)w_{2,k}$, for some $\alpha_k\in[0,1]$, where $w_{1,k}=\langle T z_{1,k},z_{1,k}\rangle$ and $w_{2,k}=\langle T^* z_{2,k},z_{2,k}\rangle$, for $z_{1,k},z_{2,k}\in\bS_{\Hi_{\C}}$ and $\langle (T-T^*)z_{2,k},z_{1,k}\rangle=0$.
Now, simply take $x_k=\sqrt{\alpha_k}z_{1,k}$ and $y_k=\sqrt{1-\alpha_k}z_{2,k}$, in Proposition \ref{prop bild complex op} and one see that $[w_{1,k},w_{2,k}]\subset \overline{B(T)}$. Hence, $r_k\in[\underline{v},\overline{v}]$. Therefore, $w_k$ can be rewritten as a convex combination of $w_{2,k}$, $\underline{v}$ and $\overline{v}$ and so $w_k$ lies in $\mathrm{conv}\,\{\overline{W_{\C}^+(T^*)},
\underline{v},\overline{v}\}$. Taking the limit it follows that $w\in\mathrm{conv}\,\{\overline{W_{\C}^+(T^*)},
\underline{v},\overline{v}\}\subseteq \mathrm{conv}\,\{\overline{W_{\C}^+(T)},\overline{W_{\C}^+(T^*)},\underline{v},\overline{v}\}$.

\textbf{Case 3}. When $(w_{1,k})_k$ lies in $\overline{W_{\C}^+(T)}$ and $(w_{2,k})_k$ lies in $\overline{W_{\C}^-(T^*)}$, is similar to Case 2.
\end{proof}

\section{$S$-Spectrum and numerical range of a normal operator}

From Theorem \ref{Theo_Sspecturm_NR} we have that $\sigma_S(T)\subseteq \overline{W(T)}$, for every $T\in \B(\Hi)$. It follows that  $\sigma_S^+(T)\subseteq \overline{B^+(T)}$, where $\sigma_S^+(T):=\sigma_S(T)\cap\C^+$. Since $\underline{v}, \overline{v}\in \overline{B^+(T)}$ and the upper bild is a convex set, we conclude that
\[
\Conv\{\sigma_S^+(T), \underline{v}, \overline{v}\}\subseteq \overline{B^+(T)}.
\]
When $T$ is a quaternionic
normal operator, the above inclusion is an equality, as we now prove.
We begin by showing that every quaternionic normal operator $T\in\B(\Hi)$ is approximately unitarily equivalent to a certain complex diagonal operator $D\in\B(\Hi)$.

\begin{proposition}\label{prop normal app diagonal}
Let $T\in\B(\Hi)$ be a normal operator. Then there exists a diagonal operator $D\in \B(\Hi)$ with respect to an orthonormal basis $\{e_n:n\in\N\}$ of $\Hi$, such that $T\sim_a D$, where $D(e_n)=e_nd_n$, and $d_n\in\C^+$.
\end{proposition}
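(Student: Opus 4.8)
The plan is to invoke the spectral theorem for bounded quaternionic normal operators on the $S$-spectrum, and then ``discretize'' the resulting spectral measure. Concretely, I would first recall from \cite{ACK} (or \cite[Chapter 9]{CGK}) that a bounded normal operator $T\in\B(\Hi)$ admits a spectral decomposition with respect to a spectral measure supported on $\sigma_S(T)\cap\C^+$, after fixing an appropriate imaginary unit; equivalently, $T$ can be realized so that there is a self-adjoint operator $A$ and an anti-self-adjoint operator $J$ with $T = A + JB$ for suitable commuting pieces, with the joint ``spectral data'' living in $\C^+$. The upshot I want to extract is that $T$ is unitarily equivalent to an operator which, on each piece of a fine partition of $\sigma_S(T)\cap\C^+$, acts ``almost'' like multiplication by a complex number in $\C^+$.

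Next I would carry out the approximation. Fix $\varepsilon>0$. Using compactness of $\sigma_S(T)$, partition $\sigma_S(T)\cap\C^+$ into finitely many Borel pieces $\Omega_1,\dots,\Omega_N$ each of diameter $<\varepsilon$, pick $d^{(m)}\in\Omega_m\cap\C^+$, and let $E_m$ be the corresponding spectral projections. On $\Ran E_m$ the operator $T$ differs in norm from right-multiplication by $d^{(m)}$ by at most $\varepsilon$ (this is the standard estimate $\|T - \sum_m d^{(m)}E_m\|\le \varepsilon$ that follows from the functional calculus). Each $\Ran E_m$ is a quaternionic Hilbert subspace, hence has a countable orthonormal basis; collecting these bases gives an orthonormal basis $\{e_n\}$ of $\Hi$ in which the operator $D_\varepsilon := \sum_m d^{(m)}E_m$ is diagonal, $D_\varepsilon(e_n)=e_n d_n$ with each $d_n\in\C^+$. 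Thus $\|T-D_\varepsilon\|\le\varepsilon$ with $D_\varepsilon$ a complex diagonal operator (in the sense of Definition \ref{def complex op}, since $\langle D_\varepsilon e_n,e_m\rangle = \delta_{nm}d_n\in\C$).

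Finally I would assemble these into an approximate unitary equivalence. For each $k\in\N$ take $\varepsilon=1/k$; the construction yields a complex diagonal operator $D_k$ with $\|T-D_k\|\le 1/k$ and $D_k(e_n^{(k)})=e_n^{(k)}d_n^{(k)}$, $d_n^{(k)}\in\C^+$, in some orthonormal basis $\{e_n^{(k)}\}$. Let $U_k\in\U(\Hi)$ be the unitary sending the fixed reference basis to $\{e_n^{(k)}\}$, and let $D$ be the diagonal operator (in the reference basis) whose diagonal entries are those of $D_1$; then $U_k^* D_k U_k$ is diagonal in the reference basis. A small extra bookkeeping step — relabelling so that all the $D_k$'s are conjugated into a single fixed diagonal operator $D$, or simply observing that $\overline{W(\cdot)}$ and $\sigma_S(\cdot)$ only depend on the $\sim_a$-class — lets me conclude $T\sim_a D$ for a single complex diagonal operator $D$ with diagonal entries in $\C^+$.

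The main obstacle I anticipate is the first step: pinning down exactly which form of the quaternionic spectral theorem is available and massaging it so that the ``eigenvalue data'' genuinely lies in $\C^+$ (the spectral measure in the quaternionic setting is naturally parametrized by a half of a slice, and one must be careful that the imaginary-unit ambiguity does not obstruct choosing all $d_n\in\C^+$). Once the spectral measure is in hand, the partition-and-diagonalize argument is routine, and the passage from a sequence of norm-$1/k$ approximations to a genuine $\sim_a$ statement is bookkeeping of the kind already used in Proposition \ref{S-spc and nr closed under a.u.}.
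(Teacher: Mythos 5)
Your route is genuinely different from the paper's. The paper never touches the spectral theorem: it simply invokes the quaternionic Weyl--von Neumann--Berg theorem of Ramesh \cite[Theorem 3.4]{R1}, which for every $\epsilon>0$ writes $T=\tilde D+K$ with $\tilde D$ diagonal and $K$ compact of norm $<\epsilon$, concludes $T\sim_a\tilde D$, and then does only the step you put last: conjugating by the diagonal unitary $Ue_n=e_nu_n$, with $u_n$ a unit pure quaternion chosen so that $u_n^*\tilde d_nu_n\in\C^+$, to rotate each diagonal entry into $\C^+$. What your approach buys is independence from that citation; what it costs is that you are in effect re-proving a piece of Weyl--von Neumann--Berg, and that is exactly where the difficulties sit.

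Two steps in your sketch are genuine gaps, not bookkeeping. First, $\sum_m d^{(m)}E_m$ with $d^{(m)}\in\C\setminus\R$ is not, as written, a right-linear operator: left multiplication by a non-real scalar is undefined on a one-sided quaternionic Hilbert space. You must express it as $\sum_m\bigl(a_mE_m+b_mJE_m\bigr)$ using the anti-self-adjoint partial isometry $J$ from the quaternionic spectral theorem, and then choose the orthonormal basis of each $\Ran E_m$ adapted to $J$ (so that $Je_n=e_ni$) in order for the operator to act as $e_n\mapsto e_nd_n$ with $d_n\in\C^+$; you flag this obstacle but do not resolve it. Second, and more seriously, producing diagonal operators $D_k$, each diagonal in its own basis, with $\|T-D_k\|\le 1/k$ does not yield $T\sim_a D$ for a \emph{single} $D$: you would need all the $D_k$ to be approximately unitarily equivalent to one fixed diagonal operator, which is the ``Berg'' half of the theorem and not a relabelling argument. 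Your fallback --- that $\overline{W(\cdot)}$ and $\sigma_S(\cdot)$ are $\sim_a$-invariants --- does not prove the proposition as stated, which asserts the existence of one $D$ with $T\sim_a D$. (To be fair, the paper's own phrase ``it follows that $T\sim_a\tilde D$'' quietly elides the same single-$D$ issue, but it at least leans on a theorem engineered to deliver it.)
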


\begin{proof}
By  the Weyl-von Neumann-Berg Theorem for quaternionic operators \cite[Theorem 3.4]{R1},  there exists a diagonal operator $\tilde{D}\in\B(\Hi)$ and a compact operator $K\in\B(\Hi)$ with $\|K\|<\epsilon$ such that $T=\tilde{D}+K$, for every $\epsilon >0$. It follows that $T\sim_a \tilde{D}$.

Let, for every $n\in \N$, $\tilde{D}e_n=e_n\tilde{d}_n$ and let $u_n$ be the element of $\bS_{\bP}$ such that $u_n^*\tilde{d}_nu_n\in \C^+$. Let $U\in \U(\Hi)$ be given by $Ue_n=e_nu_n$. It follows that $\tilde{D}$ is unitarily equivalent to the diagonal operator $D=U^*\tilde{D}U$. Moreover, $De_n=e_nd_n$, for every $n\in \N$, with $d_n=u^*_n\tilde{d}_nu_n\in\C^+$. By transitivity $T\sim_a D$.
\end{proof}

\begin{theorem}\label{theo_upperbild_spectrum}
Let $T\in \B(\Hi)$ be a normal operator. Then
\[
\overline{B^+(T)}=\Conv \{\sigma_{S}^+(T), \underline{v}, \overline{v}\},
\]
where $\underline{v}=\inf B(T)\cap \R$ and $\overline{v}=\sup B(T)\cap \R$.
\end{theorem}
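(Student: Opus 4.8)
The plan is to prove the two inclusions separately. The inclusion $\Conv\{\sigma_S^+(T),\underline{v},\overline{v}\}\subseteq\overline{B^+(T)}$ was already observed in the text preceding the statement (it only uses Theorem \ref{Theo_Sspecturm_NR}, the fact that $\underline{v},\overline{v}\in\overline{B^+(T)}$ by definition, and convexity of $\overline{B^+(T)}$ from the first theorem of the paper), so the real content is the reverse inclusion $\overline{B^+(T)}\subseteq\Conv\{\sigma_S^+(T),\underline{v},\overline{v}\}$. The first step is a reduction: by Proposition \ref{prop normal app diagonal}, $T\sim_a D$ for a complex diagonal operator $D$ with $De_n=e_nd_n$, $d_n\in\C^+$; by Proposition \ref{S-spc and nr closed under a.u.}, both $\sigma_S(\cdot)$ and $\overline{W(\cdot)}$ are invariant under $\sim_a$, hence $\overline{B^+(T)}=\overline{W(T)}\cap\C^+=\overline{W(D)}\cap\C^+=\overline{B^+(D)}$ and $\sigma_S^+(T)=\sigma_S^+(D)$, and one checks that the defining reals $\underline{v},\overline{v}$ are likewise preserved (they are determined by $\overline{B(T)}\cap\R$, which equals $\overline{B(D)}\cap\R$). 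So it suffices to prove the statement for the diagonal operator $D$.

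Next I would invoke the structural results of Section 3: $D$ is a complex operator, so Theorem \ref{theo upper bild} gives $\overline{B^+(D)}=\Conv\{\overline{W_\C^+(D)},\overline{W_\C^+(D^*)},\underline{v},\overline{v}\}$. Thus it remains to show $\overline{W_\C(D)}\subseteq\Conv\{\sigma_S^+(D),\underline{v},\overline{v}\}$ as well as the same with $D^*$ (after taking upper halves). Here I would use Proposition \ref{Prop Sspectrum complex op}, which identifies $\sigma_S(D)=[\sigma_\C(D)]$, and the explicit diagonal form: $D_{|}$ acts on $\Hi_\C$ as the complex diagonal operator with entries $d_n$, so its $\C$-numerical range satisfies $\overline{W_\C(D)}=\overline{W_\C(D_{|})}=\overline{\Conv\{d_n:n\in\N\}}$, a standard fact for complex diagonal operators, and $\sigma_\C(D)=\overline{\{d_n:n\in\N\}}$. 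Since each $d_n\in\C^+$, we get $\overline{W_\C(D)}\subseteq\overline{\Conv(\sigma_\C(D))}\subseteq\Conv(\sigma_S^+(D))$, using that closed convex hulls of bounded sets in $\C$ are closed; in particular $\overline{W_\C^+(D)}\subseteq\Conv\{\sigma_S^+(D),\underline{v},\overline{v}\}$. For $D^*$ one has $D^*e_n=e_n\overline{d_n}$, whence $\overline{W_\C(D^*)}=\overline{\Conv\{\overline{d_n}\}}$; the points $\overline{d_n}$ lie in $\C^-$, but their similarity classes are the same as those of $d_n$, and what we need is only the upper half $\overline{W_\C^+(D^*)}$. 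An element of $\overline{W_\C^+(D^*)}$ is a limit of convex combinations of the $\overline{d_n}$ that happens to land in $\C^+$; I would argue, exactly as in Cases 2–3 of the proof of Theorem \ref{theo upper bild}, that such an element lies on a segment joining a point of $\overline{\Conv\{d_n\}}\cap\C^+\subseteq\Conv(\sigma_S^+(D))$ to a real point of $\overline{B(D)}\cap\R=[\underline{v},\overline{v}]$, hence belongs to $\Conv\{\sigma_S^+(D),\underline{v},\overline{v}\}$.

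Assembling: $\overline{B^+(D)}=\Conv\{\overline{W_\C^+(D)},\overline{W_\C^+(D^*)},\underline{v},\overline{v}\}\subseteq\Conv\{\sigma_S^+(D),\underline{v},\overline{v}\}$, and combined with the easy inclusion this gives equality for $D$, hence for $T$. The main obstacle I anticipate is the bookkeeping around $D^*$: the eigenvalues $\overline{d_n}$ sit in the lower half-plane, so one cannot simply say $\overline{W_\C^+(D^*)}\subseteq\Conv(\sigma_S^+(D))$ directly, and one must genuinely route through the real axis and the reals $\underline{v},\overline{v}$ — this is the same subtlety that forced the three-case split in Theorem \ref{theo upper bild}, and I would either re-run that argument or, more cleanly, observe that $\Conv\{\overline{W_\C^+(D)},\overline{W_\C^+(D^*)},\underline{v},\overline{v}\}=\Conv\{\overline{W_\C^+(D)}\cup\overline{W_\C^+(D^*)},[\underline{v},\overline{v}]\}$ and that every point of $\overline{W_\C^+(D^*)}$ not already in $\Conv\{\sigma_S^+(D)\}$ is separated from it by the real axis, so it is captured once $[\underline{v},\overline{v}]$ is adjoined. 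A secondary, routine point to verify carefully is that $\underline{v},\overline{v}$ are unchanged under the reduction $T\rightsquigarrow D$, which follows since $\overline{B(T)}\cap\R=\overline{W(T)}\cap\R=\overline{W(D)}\cap\R=\overline{B(D)}\cap\R$.
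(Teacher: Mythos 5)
Your proposal is correct and follows essentially the same route as the paper: reduce to a complex diagonal operator $D$ with $d_n\in\C^+$ via Propositions \ref{prop normal app diagonal} and \ref{S-spc and nr closed under a.u.}, apply Theorem \ref{theo upper bild}, and identify $\overline{W_{\C}(D)}=\Conv\{\sigma_{\C}(D)\}$ with $\Conv\{\sigma_S^+(D)\}$ via Proposition \ref{Prop Sspectrum complex op}. The only difference is that the paper dispatches the $D^*$ term you worry about in one line: since $\overline{W_{\C}(D^*)}\subset\C^-$, its upper part $\overline{W^+_{\C}(D^*)}$ is contained in $\R$ and hence already absorbed into $[\underline{v},\overline{v}]$, so no re-run of the Cases 2--3 argument is needed.
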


\begin{proof}
It remains to prove that
\[
\overline{B^+(T)}\subseteq
\Conv\{\sigma_S^+(T), \underline{v}, \overline{v}\}.
\]
From Proposition \ref{prop normal app diagonal} there exists a diagonal operator $D\in\B(\Hi)$ such that $T\sim_a D$. By Proposition \ref{S-spc and nr closed under a.u.}, we have $\overline{W(T)}=\overline{W(D)}$ and so $\overline{B^+(T)}=\overline{B^+(D)}$.

Since $D$ is a complex operator as defined in Proposition \ref{prop normal app diagonal}, we have $\overline{W_\C(D)}\subset \C^+$, $\overline{W_\C(D^*)}\subset \C^-$. Hence, Theorem \ref{theo upper bild} implies that
\[
\overline{B^+(D)}=
\Conv\{\overline{W_\C(D)}, \underline{v}, \overline{v}\}.
\]
From \cite[Theorem 1.4-4]{GR} we know that $\overline{W_\C(D)}=\Conv\{\sigma_{\C}(D)\}$.
Since $\Conv\{\sigma_{\C}(D)\}=\Conv\{\sigma_S^+(D)\}$ and $\sigma_S(D)=\sigma_S(T)$ (see Proposition \ref{S-spc and nr closed under a.u.}) the result follows.
\end{proof}

Now we characterize $\underline{v}$ and $\overline{v}$ for quaternionic normal operators $T\in\B(\Hi)$. We will see that these  two real values are
constructed from pairs of eigenvalues.
As  mentioned above, $T\sim_a D$, where $De_n=e_nd_n, d_n=h_n+s_ni\in\C^+$, for every $n\in \N$, and $\overline{W(T)}=\overline{W(D)}$.
Since $D$ is a complex operator, the decomposition (\ref{opTcomplex}) can be written, upon restriction to $\Hi_{\C}$, in the form
\[
D=\widetilde{H}+\widetilde{S}=H+S\cdot i,
\]
where $H=\widetilde{H}_{|}$  and $S=-\widetilde{S}_{|}\cdot i$ are diagonal operators. Specifically, $He_n=e_nh_n$ and $Se_n=e_ns_n$, with $h_n\in\bR$ and $s_n\in\bR_0^+$. Clearly, we have $D-D^*=2S\cdot i$.
Thus, from Proposition \ref{prop bild complex op} we may write the bild of $D$ in the form
\begin{eqnarray*}
  B(D) &=&  \{\langle (H+S\cdot i)x, x\rangle+ \langle (H-S\cdot i)y, y\rangle\,:\; (x,y)\in\bS_{\Hi_\C^2},\; \langle Sy, x\rangle=0\}\\
   &=&   \{\langle Hx, x\rangle+ \langle Hy, y\rangle+\left(\langle Sx, x\rangle-\langle Sy, y\rangle\right)i\,:\; (x,y)\in\bS_{\Hi_\C^2},\; \langle Sy, x\rangle=0\}.
\end{eqnarray*}

So, the real elements of the bild are of the form
\[
  B(D)\cap\bR = \{\langle Hx, x\rangle+ \langle Hy, y\rangle\,:\; (x,y)\in\bS_{\Hi_\C^2},\; \langle Sy, x\rangle=0, \; \langle Sx, x\rangle=\langle Sy, y\rangle\}.
\]

Therefore, in order to characterize $\underline{v}$ (and $\overline{v}$) one needs to find the infimum (supremum) of the real function
\begin{eqnarray}
f(x,y) &=& \langle Hx,x\rangle + \langle Hy,y\rangle\label{f}\\
   &=& \sum_{k=1}^\infty h_k\Big(|x_k|^2+|y_k|^2\Big)\nonumber
\end{eqnarray}
for $(x,y)$ subject to\\
\begin{center}
  (I) \;$\langle Sx, x\rangle=\langle Sy, y\rangle$,\quad\quad
(II)\;$\langle Sy, x\rangle=0$,\quad\quad
(III) \;$(x,y)\in\bS_{\Hi_\C^2}$.\\
\end{center}
These pairs $(x,y)$ define a domain $\Omega\subset \Hi$. In fact, $(x,y)\in \Hi_{\C}^2$ verifies $(I), (II), (III)$ if, and only if, $(x,y)$ is in the fiber $\Omega=\varphi^{-1}(0)$, where
$\varphi=(\varphi_1,\varphi_2,\varphi_3)$ and
$\varphi_i:\Hi_{\C}^2 \longrightarrow\C \quad (i=1,2,3)$ are given by
\begin{eqnarray*}
  \varphi_1(x,y) &=& \langle Sx, x\rangle - \langle Sy, y\rangle= \sum_{k=1}^\infty s_k\Big(|x_k|^2-|y_k|^2\Big)\\
  \varphi_2(x,y) &=& \langle Sy, x\rangle= \sum_{k=1}^\infty s_kx_k^*y_k \\
 \varphi_3(x,y) &=& \langle x, x\rangle+ \langle y, y\rangle-1= \sum_{k=1}^\infty |x_k|^2+|y_k|^2-1.
\end{eqnarray*}

%
%

 The strategy of the proof reduces to the numerical range of normal $N\times N$ matrices (see \cite{CDM4}), followed by a passage to the limit. Concretely, we begin by
 considering the $N\times N$ diagonal matrix $D_N=\diag\{d_1, d_2,\dots, d_N\}$ and  proving that $\vmin$ is smaller than $\vmin_N$, where $\vmin_N$ is the minimum of $B(D_N)\cap \R$, that is, the minimum of the function
  \[
  f_N:\Omega_N\longrightarrow \bR
  \]
 \begin{equation}\label{f_N}
 f_N(x,y)=\sum_{k=1}^N h_k\Big(|x_k|^2+|y_k|^2\Big)
  \end{equation}
  over a domain $\Omega_N\subset \C^{2N}$, defined by
  \begin{equation}\label{OmegaN}
 \Omega_N=\{(x,y)\in {\C}^{2N}\;: \;\varphi^N(x,y)=0\},
  \end{equation}
with $\varphi^N=(\varphi_1^N,\varphi_2^N,\varphi_3^N)$, the $\varphi_i^N$ being the finite analogues of $\varphi_i$ defined above.


 To begin with, we restrict to the case where there is a $j \in \N$ such that $s_jx_j y_j\neq 0 $. The case where $s_jx_j y_j=0$ for any $j \in \N$ will be dealt in the proof of Theorem \ref{theo_vmin_vmax}. For simplicity assume that $j=1$ and therefore we assume now that we are in the case where $s_1|x_1||y_1| > 0$. Note that condition (II) implies that

\begin{equation}\label{conditionii}
0<s_1|x_1||y_1|\leq\sum_{k=2}^\infty s_k|x_k||y_k|.
\end{equation}

We will consider two cases.\\
\textbf{Case 1.} $s_1|x_1||y_1|<\sum_{k=2}^\infty s_k|x_k||y_k|$.
\\

Since $s_1|x_1y_1|<\sum_{k=2}^\infty s_k|x_k||y_k|$, there is an $M$ such that for $N>M$,
\begin{equation}\label{condN}
  0<s_1|x_1y_1|<\sum_{k=2}^N s_k|x_k||y_k|.
\end{equation}

\begin{lemma}\label{lemma_phi2=0}
  Let $(x,y)\in \Omega$.
     If $s_1|x_1y_1|<\sum_{k=2}^\infty s_k|x_k||y_k|$, then there
  is $M\in\N$ such that for $N>M$, there is $(x',y')\in\C^{2N}$ with
\[
|x'_k|=|x_k|,\quad |y'_k|=|y_k|,\quad   k=1,\dots, N,
   \]
  and $\varphi_2^N(x',y')=0$.
    \end{lemma}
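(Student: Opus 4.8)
The plan is to produce the vector $(x',y')$ by modifying only the phases (arguments) of the complex coordinates $x_k,y_k$, keeping all moduli $|x_k|,|y_k|$ fixed, so that condition $(III)$ — the norm constraint — is automatically preserved and the value of $f_N$ is unchanged (since $f_N$ depends only on $|x_k|^2+|y_k|^2$). Write $x_k=|x_k|e^{i\theta_k}$ and $y_k=|y_k|e^{i\psi_k}$ and set $\beta_k=\psi_k-\theta_k$, so that $x_k^*y_k=|x_k||y_k|e^{i\beta_k}$. Then $\varphi_2^N(x',y')=\sum_{k=1}^N s_k|x_k||y_k|e^{i\beta_k'}$, and the task is to choose phases $\beta_k'$ (equivalently, rotate each pair) so that this sum of complex numbers of prescribed moduli $r_k:=s_k|x_k||y_k|\ge 0$ vanishes.

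First I would recall the elementary geometric fact: a finite collection of nonnegative reals $r_1,\dots,r_N$ can be realized as the moduli of complex numbers summing to zero if and only if no single $r_\ell$ exceeds the sum of the others, i.e. $r_\ell\le\sum_{k\ne\ell}r_k$ for every $\ell$ (the "polygon inequality"). This is proved by an easy induction, or by placing $r_1$ and $r_2$ as two sides of a planar polygon and combining them. The hypothesis of the lemma is exactly that, after discarding the inequality for $\ell=1$, the relevant bound holds: by \eqref{conditionii} and the assumption $s_1|x_1y_1|<\sum_{k\ge2}s_k|x_k||y_k|$, there is $M\in\N$ so that for all $N>M$ one has $0<s_1|x_1y_1|<\sum_{k=2}^N s_k|x_k||y_k|$, which is \eqref{condN}. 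For such $N$, I would then argue that the whole family $\{r_k\}_{k=1}^N$ satisfies the polygon inequality for \emph{every} index $\ell$: for $\ell=1$ this is \eqref{condN}; for $\ell\ge2$ it follows since $r_\ell=s_\ell|x_\ell||y_\ell|\le\sum_{k=2}^N s_k|x_k||y_k|$ already (dropping $r_\ell$ from that sum and adding back $r_1>0$ only helps), so $r_\ell<r_1+\sum_{k\ne1,\ell}r_k=\sum_{k\ne\ell}r_k$. Hence the polygon inequality holds for all $\ell$, and the closed-polygon realization gives angles $\beta_k'$ with $\sum_{k=1}^N r_ke^{i\beta_k'}=0$.

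To finish, I would set, say, $x'_k=|x_k|$ (all phases zero) and $y'_k=|y_k|e^{i\beta_k'}$ for $k=1,\dots,N$; then $|x'_k|=|x_k|$, $|y'_k|=|y_k|$, and $\varphi_2^N(x',y')=\sum_{k=1}^N s_k|x_k||y_k|e^{i\beta_k'}=0$, as required. The main obstacle — really the only point needing care — is the bookkeeping that guarantees the polygon inequality holds \emph{for the index $1$ as well}, which is precisely where the strict inequality hypothesis $s_1|x_1y_1|<\sum_{k\ge2}s_k|x_k||y_k|$ enters and forces the truncation threshold $M$; the hypothesis $s_1|x_1||y_1|>0$ ensures we are in the genuinely "vectorial" (non-degenerate) situation rather than a collinear one. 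Everything else is the standard closed-polygon construction, which I would either cite or dispatch with a one-line induction.
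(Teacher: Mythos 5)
Your overall strategy --- fix the moduli $|x_k|,|y_k|$, adjust only the relative phases so that the $k$-th term of $\varphi_2^N$ becomes $r_ke^{i\beta_k'}$ with $r_k=s_k|x_k||y_k|$, and then invoke the closed-polygon criterion --- is reasonable and close in spirit to the paper's argument, which also works on the torus $\Pi_{k=1}^N\bS(0,|x_k|)\times\bS(0,|y_k|)$ but locates the zero by a connectedness/intermediate-value argument between the all-positive configuration and the one with $\hat y_k=-|y_k|$ for $k\ge2$. The problem is your verification of the polygon inequality at the indices $\ell\ge2$. The deduction ``$r_\ell\le\sum_{k=2}^N r_k$ already, hence $r_\ell<r_1+\sum_{k\ne1,\ell}r_k$'' is a non sequitur: the inequality $r_\ell\le\sum_{k=2}^N r_k$ is vacuous because $r_\ell$ is one of the summands, and what you actually need is $r_\ell\le\sum_{k\le N,\,k\ne\ell}r_k$, equivalently $2r_\ell\le\sum_{k\le N}r_k$, which does not follow from (\ref{condN}). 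Concretely, for truncated data $r_1=1$, $r_2=10$, $r_3=1$ your checked condition at $\ell=1$ holds ($1<11$), yet $r_2>r_1+r_3$, so no choice of phases makes $\sum_{k=1}^3 r_ke^{i\beta_k'}$ vanish.

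The missing inequality must come from condition (II) on the \emph{full} sequence: $\sum_{k=1}^\infty s_kx_k^*y_k=0$ forces $r_\ell\le\sum_{k\ne\ell}r_k$ for every $\ell$ over all of $\N$, which is what rules out the triple above as a truncation of a genuine point of $\Omega$. But even after importing that, the passage to the finite truncation is not automatic: for some $\ell_0\ge2$ the full inequality may be an equality, $r_{\ell_0}=\sum_{k\ne\ell_0}r_k$, and if infinitely many $r_k$ are nonzero this is compatible with the Case 1 hypothesis $r_1<\sum_{k\ge2}r_k$, while $\sum_{k\le N}r_k<\sum_k r_k=2r_{\ell_0}$ for every $N$, so the truncated polygon inequality fails at $\ell_0$ for all $N$ and no vector with those exact moduli can satisfy $\varphi_2^N(x',y')=0$. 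You would need either to show this degenerate situation cannot occur, or to permit a small perturbation of the moduli at such an index (in the spirit of the paper's Case 2 treatment of the index $1$). As written, the proof has a genuine gap at the indices $\ell\ge2$.
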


  \begin{proof}
Chose $N$ according to (\ref{condN}) and let $(\check{x}, \check{y}), (\hat{x}, \hat{y})\in \bS_{\C^{2N}}$, with
  \begin{eqnarray}
    \check{x}_k=\hat{x}_k=|x_k|\,, & & \check{y}_k  = |y_k|,\quad \quad k=1, \dots, N, \nonumber \\
    \hat{y}_1= |y_1|,\quad\quad\quad  & & \, \hat{y}_k=-|y_k|, \quad k=2, \dots, N.\label{yk}
  \end{eqnarray}

  From condition (\ref{condN}) we have $\varphi_2^N(\check{x}, \check{y})>0$. From (\ref{condN}) and (\ref{yk}), it follows
  that  $\varphi_2^N(\hat{x}, \hat{y})=s_1|x_1||y_1|-\sum_{k=2}^N s_k|x_k||y_k|<0$. On the other hand, path connectedness of $\Pi_{k=1}^N \bS(0,|x_k|)\times \bS(0,|y_k|)$ implies that there is a continuous path
   $\gamma:[0,1]\longrightarrow \Pi_{k=1}^N \bS(0,|x_k|)\times \bS(0,|y_k|)$, joining $(\hat{x}, \hat{y})$ to $(\check{x}, \check{y})$, where
$\gamma(0)=(\hat{x}, \hat{y})$
and
$\gamma(1)=(\check{x}, \check{y}).$
  Since $\varphi_2^N\circ\gamma(0)<0<\varphi_2^N\circ\gamma(1)$,
  by continuity there is a $t_0\in [0,1]$ such that  $\varphi_2^N\circ\gamma(t_0)=0$.
  Take $(x',y')=\gamma(t_0)$. Since each coordinate of $\gamma$ is over a sphere of constant radius, it is clear that $|x'_k|=|x_k|$ and $|y'_k|=|y_k|,  k=1,\dots, N$.

  \end{proof}

We now associate to each vector $z$ in $\ell^2$ or in $\C^n$ a new vector $z_{\parallel}$. If $z\in\ell^2$
the vector is $z_{\parallel}=(|z_1|^2,|z_2|^2, \dots )\in\ell^1$, in the case that $z \in \C^n$ then let $z_\parallel=(|z_1|^2,|z_2|^2, \dots, |z_n|^2, 0,0,\dots,0 )\in\ell^1.$
We will show that, for $x,y\in \ell^2$, $(x_{\parallel}, y_{\parallel})$  can be approximated by
 a vector with finite support which belongs to the domain $\Omega$.

\begin{lemma}\label{lemma_omega_nonempty}
Let $(x,y)\in \Omega$. Then there is $(\hat{x}^\varepsilon, \hat{y}^\varepsilon)\in \Omega$ with finite support
such that
\begin{equation}\label{convergence}
(\hat{x}^\varepsilon_\parallel, \hat{y}^\varepsilon_\parallel) \longrightarrow_{\ell_1} (x_\parallel,y_\parallel), \quad
\text{as}\quad \varepsilon\longrightarrow 0.
\end{equation}
\end{lemma}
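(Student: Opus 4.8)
The goal is to take $(x,y)\in\Omega$, so satisfying conditions (I), (II), (III), and produce finitely-supported $(\hat{x}^\varepsilon,\hat{y}^\varepsilon)\in\Omega$ with $(\hat{x}^\varepsilon_\parallel,\hat{y}^\varepsilon_\parallel)\to(x_\parallel,y_\parallel)$ in $\ell^1$. My plan is to first truncate $(x,y)$ to its first $N$ coordinates, then repair the three constraints by a small perturbation of the moduli of a few coordinates (while leaving the phases free to re-impose $\varphi_2=0$, as in Lemma \ref{lemma_phi2=0}). The key quantitative input is condition \eqref{conditionii}: because $0<s_1|x_1||y_1|\le\sum_{k\ge 2}s_k|x_k||y_k|$, for $N$ large we have the truncated inequality \eqref{condN}, which is exactly the hypothesis of Lemma \ref{lemma_phi2=0}. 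So the skeleton is: truncate; rescale to fix the norm (III); fix $\varphi_1$ (condition (I)) by perturbing a pair of moduli; then invoke Lemma \ref{lemma_phi2=0} to choose phases making $\varphi_2^N=0$; and finally show all perturbations are $O(\varepsilon)$ in $\ell^1$, so \eqref{convergence} holds.

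**The repair step in detail.** Given $\varepsilon>0$, pick $N$ so large that $\sum_{k>N}(|x_k|^2+|y_k|^2)<\varepsilon$ and so that \eqref{condN} holds. Let $x^{(N)},y^{(N)}$ be the truncations. These satisfy $\varphi_2^N(x^{(N)},y^{(N)})=\langle Sy,x\rangle-\sum_{k>N}s_k x_k^*y_k=-\sum_{k>N}s_kx_k^*y_k$, which has modulus $O(\varepsilon)$; similarly $\varphi_1^N$ and $\varphi_3^N$ evaluated at the truncation are $O(\varepsilon)$. To restore (I) and (III) exactly, I would adjust the moduli of two well-chosen coordinates — say coordinate $1$ and one coordinate $\ell$ with $s_\ell\neq s_1$ if such exists (the case where all relevant $s_k$ coincide needs separate, easy handling) — solving a $2\times 2$ system for the corrected moduli $|x_1'|,|x_\ell'|$ (or $|y|$'s): the Jacobian of $(\varphi_1^N,\varphi_3^N)$ in those two modulus-squared variables is nonsingular precisely when $s_1\neq s_\ell$, so the implicit function theorem gives a correction of size $O(\varepsilon)$. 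After this correction, \eqref{condN} still holds (perturbing $N$ upward if needed, since the correction is small), so Lemma \ref{lemma_phi2=0} applies and yields phases making $\varphi_2^N=0$ while preserving all the moduli. The resulting vector is $(\hat{x}^\varepsilon,\hat{y}^\varepsilon)\in\Omega_N\subseteq\Omega$ with finite support.

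**Convergence and the obstacle.** Since $(\hat{x}^\varepsilon_\parallel,\hat{y}^\varepsilon_\parallel)$ differs from $(x_\parallel,y_\parallel)$ only by: (a) the tail $\sum_{k>N}(|x_k|^2+|y_k|^2)<\varepsilon$, and (b) the modulus corrections on coordinates $1$ and $\ell$, each of size $O(\varepsilon)$ in the $|\cdot|^2$ variable, the total $\ell^1$ discrepancy is $O(\varepsilon)$, giving \eqref{convergence}. The main obstacle I anticipate is the degenerate configuration in which the constraint-repair Jacobian is singular — concretely, when $s_1$ equals every other $s_k$ for which $x_k$ or $y_k$ is nonzero, so that $\varphi_1$ and $\varphi_3$ are not independent functions of the available moduli. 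In that situation one cannot simultaneously correct (I) and (III) with just two coordinates; the fix is to observe that when all active $s_k$ are equal to a common value $s$, condition (I) reads $s\|x\|^2=s\|y\|^2$, i.e. $\|x\|^2=\|y\|^2=\tfrac12$, and condition (II) becomes $s\langle y,x\rangle_{\C}=0$; then one can just truncate, rescale $x$ and $y$ separately to norm $1/\sqrt2$, and re-impose $\langle y,x\rangle_\C=0$ by a phase rotation à la Lemma \ref{lemma_phi2=0} — all $O(\varepsilon)$ in moduli. A secondary subtlety is checking that Lemma \ref{lemma_phi2=0}'s hypothesis survives the $O(\varepsilon)$ correction of the moduli; this is immediate because \eqref{condN} is a strict inequality with a fixed gap once $N$ is fixed, and $\varepsilon$ can be taken smaller than that gap.
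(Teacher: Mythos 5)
Your overall skeleton (truncate, repair the three constraints, invoke the intermediate--value phase argument of Lemma \ref{lemma_phi2=0}, and bound the $\ell^1$ discrepancy) matches the paper, and your argument can be made to work, but your mechanism for repairing conditions (I) and (III) is genuinely different from the paper's, and this is where the two proofs diverge. The paper never touches individual coordinates: after obtaining $(x',y')$ from Lemma \ref{lemma_phi2=0} with the \emph{original} truncated moduli, it sets $(\overline{x}^\varepsilon,\overline{y}^\varepsilon)=(\sqrt{1-\alpha_\varepsilon}\,x',y')/n_\varepsilon$ with $\alpha_\varepsilon=\sum_{k\le N_\varepsilon}s_k\big(|x_k|^2-|y_k|^2\big)\big/\sum_{k\le N_\varepsilon}s_k|x_k|^2$. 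The global factor $\sqrt{1-\alpha_\varepsilon}$ on the $x$-block kills $\varphi_1^{N_\varepsilon}$ exactly, the normalization by $n_\varepsilon$ restores (III), and both operations preserve $\varphi_2^{N_\varepsilon}=0$ because $\varphi_2$ is homogeneous in each block separately; the tail estimates together with condition (I) give $\alpha_\varepsilon\to 0$ and $n_\varepsilon\to 1$, hence the $\ell^1$ convergence. This buys a uniform argument with no nondegeneracy hypothesis and no case analysis. Your two-coordinate implicit-function repair, by contrast, forces you to (i) assume the existence of two active coordinates with $s_1\neq s_\ell$ and treat the equal-$s_k$ configuration separately, and (ii) verify that the $O(\varepsilon)$ corrections keep the perturbed modulus-squared values nonnegative --- not automatic if the chosen coordinate $\ell$ has $|x_\ell|$ or $|y_\ell|$ small or zero, a point you do not address. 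You also apply Lemma \ref{lemma_phi2=0} to moduli that no longer come from a point of $\Omega$, so strictly speaking you are reusing its intermediate-value proof rather than its statement (harmless, since only the strict inequality \eqref{condN} is needed, but it should be said). None of these is fatal, but each costs an extra justification that the paper's one-parameter rescaling renders unnecessary.
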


\begin{proof}
Fix $\varepsilon>0$ and choose $N>\frac{1}{\varepsilon}$ such that
\begin{equation}\label{series pequenas}
\sum_{k=N+1}^{\infty} s_k|x_k|^2<\frac{\varepsilon}{2} \quad\text{and}\quad
 \sum_{k=N+1}^{\infty} s_k|y_k|^2<\frac{\varepsilon}{2}.
\end{equation}
Note that $N$ exists since $S$ is bounded.
From condition (\ref{condN}) we have that $x\neq 0$ and $y\neq 0$. Pick $M$ from lemma \ref{lemma_phi2=0} and let $N_\varepsilon>\max\{N,M\}$. Then there is $(x',y')\in\C^{2N_\varepsilon}$ such that $\varphi_2^{N_{\varepsilon}}(x',y')=0$. Let
\begin{equation}\label{x_barra_epsilon}
  (\overline{x}^\varepsilon,\overline{y}^\varepsilon)=
  \frac{(\sqrt{1-\alpha_\varepsilon}x',y')}{n_\varepsilon},
\end{equation}
with $n_\varepsilon=\|(\sqrt{1-\alpha_\varepsilon}x',y')\|$ and
$\alpha_\varepsilon=\dfrac{\sum_{k=1}^{N_\varepsilon} s_k\Big(|x_k|^2-|y_k|^2\Big)}
{\sum_{k=1}^{N_\varepsilon} s_k|x_k|^2}.$
Such $\alpha_\varepsilon\in [0,1)$ is well defined since by hypothesis $s_1|x_1|>0$; and $n_\varepsilon \neq 0$ since the vector $(x',y')$ in Lemma \ref{lemma_phi2=0} is chosen in a way that $y'_1= |y_1| >0$.

Moreover, we have that $\alpha_\varepsilon\rightarrow 0$ as
$\varepsilon\rightarrow 0$. In fact, since $(x,y)\in \Omega$, $\varphi_1(x,y)=0$.
From (\ref{series pequenas}) and (I) it follows that
\begin{eqnarray*}
 \left|\sum_{k=1}^{N_\varepsilon} s_k\Big(|x_k|^2-|y_k|^2\Big)\right| & = &
 \left|\sum_{k=1}^{\infty} s_k\Big(|x_k|^2-|y_k|^2\Big)- \sum_{k=N_\varepsilon+1}^\infty
   s_k\Big(|x_k|^2-|y_k|^2\Big)\right| \\
   &\leq & \sum_{k=N_\varepsilon+1}^\infty s_k|x_k|^2 +
    \sum_{k=N_\varepsilon+1}^\infty s_k|y_k|^2\\
   & < & \varepsilon.
\end{eqnarray*}

First we will show that $(\overline{x}^\varepsilon,\overline{y}^\varepsilon)\in
\Omega_{N_{\varepsilon}}$.
From definition of $\alpha_\varepsilon$ and  Lemma \ref{lemma_phi2=0}, we have
\begin{eqnarray*}
  \varphi_1^{N_\varepsilon} (\overline{x}^\varepsilon,\overline{y}^\varepsilon)&=&
  \frac{1}{n_\varepsilon}\left[\sum_{k=1}^{N_\varepsilon} s_k\Big((1-\alpha_\varepsilon)
  |x'_k|^2-|y'_k|^2\Big)\right] \\
   &=& \frac{1}{n_\varepsilon}\left[\sum_{k=1}^{N_\varepsilon}
   s_k\Big(|x_k|^2-|y_k|^2\Big)-\alpha_\varepsilon \sum_{k=1}^{N_\varepsilon}
   s_k  |x_k|^2\right] \\
   &=& 0.
\end{eqnarray*}
Also, $\varphi_2^{N_\varepsilon}(\overline{x}^\varepsilon,\overline{y}^\varepsilon)=
\frac{\sqrt{1-\alpha_\varepsilon}}{n_\varepsilon^2}\varphi_2^{N_\varepsilon}(x',y')=0$ and $\varphi_3^{N_\varepsilon}(\overline{x}^\varepsilon,\overline{y}^\varepsilon)=0$, since $(\overline{x}^\varepsilon,\overline{y}^\varepsilon)\in
\bS_{\C^{2N_\varepsilon}}.$
Therefore, $(\overline{x}^\varepsilon,\overline{y}^\varepsilon)\in
\Omega_{N_{\varepsilon}}$.

Now, we will define a vector based on  $(\overline{x}^\varepsilon,\overline{y}^\varepsilon)$, which is in $\Omega$,  and prove that this vector converges to $(x_{\parallel},y_{\parallel})$ as $\varepsilon \rightarrow 0$.

 Let $\hat{x}^\varepsilon\in \Hi$ be given by $\hat{x}^\varepsilon_k=\overline{x}^\varepsilon_k$, for $1\leq k\leq N_\varepsilon$ and $0$ for $k>N_\varepsilon$, and let $\hat{y}^\varepsilon$ be defined in the same way.
  Note that, since $(\overline{x}^\varepsilon,\overline{y}^\varepsilon)\in
\Omega_{N_{\varepsilon}}$, $(\hat{x}^\varepsilon,\hat{y}^\varepsilon)\in \Omega$.
 We have, from (\ref{x_barra_epsilon}), that
\begin{eqnarray*}
\|\hat{x}^\varepsilon_\parallel- x_\parallel\|_{\ell^1} &=& \sum_{k=1}^\infty\left|\hat{x}^\varepsilon_{\parallel,k}-
  {x}_{\parallel,k}\right|\\
&=&
 \sum_{k=1}^{N_\varepsilon}\left||
 {\overline{x}}_{k}^\varepsilon|^2-
  |{x}_{k}|^2\right| +
  \sum_{k=N_\varepsilon+1}^\infty
  |{x}_{k}|^2\\
&=&
 \sum_{k=1}^{N_\varepsilon}\left|\frac{1-\alpha_\varepsilon}{n^2_\varepsilon}|
 {x}_{k}|^2-
  |{x}_{k}|^2\right| +
  \sum_{k=N_\varepsilon+1}^\infty
  |{x}_{k}|^2\\
&\leq &
 \dfrac{|1-\alpha_\varepsilon-n_\varepsilon^2|}{n_\varepsilon^2}\|x\|^2_{\ell^2}
 + \sum_{k=N_\varepsilon+1}^\infty
|{x}_{k}|^2.
\end{eqnarray*}
Since $\alpha_\varepsilon\rightarrow 0,
n_\varepsilon\rightarrow 1$  and $\sum_{k=N_\varepsilon+1}^\infty
  |{x}_{k}|^2\rightarrow 0$ as $\varepsilon \rightarrow 0$, it follows that $\|\hat{x}^\varepsilon_\parallel- x_\parallel\|_{\ell^1}\longrightarrow 0$.
Analogously, we can show that $\|\hat{y}^\varepsilon_\parallel- y_\parallel\|_{\ell^1}\longrightarrow 0$ and (\ref{convergence}) follows.

\end{proof}

\bigskip

\textbf{Case 2.} $0<s_1|x_1||y_1|=\sum_{k=2}^\infty s_k|x_k||y_k|$

Part of the proof of case 2 is analogous to case 1, so we only give details of the new arguments.

\begin{lemma}\label{c2lemma_phi2=0}
  Let $(x,y)\in \Omega$.
   If $s_1|x_1y_1|=\sum_{k=2}^\infty s_k|x_k||y_k|$, for every $\varepsilon>0$, there
  is $M\in\N$ such that for $N>M$, there is $(x',y')\in\C^{2N}$ and $0\leq t'<\varepsilon$ such that
  \begin{eqnarray*}
   {x}_1'= \sqrt{|x_1|^2-t'},& &\quad {y'}_1 = \sqrt{|y_1|^2-t'}, \\
 \quad \;\;\; {x}_k'=-|x_k|,\quad \;\;\quad & &\; \quad y_k'= |y_k|,\quad   k=2,\dots, N,
  \end{eqnarray*}
  and $\varphi_2^N(x',y')=0$.
  \end{lemma}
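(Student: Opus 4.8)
The plan is to mimic the connectedness/intermediate-value argument of Lemma \ref{lemma_phi2=0}, but with a twist to handle the equality case $s_1|x_1y_1| = \sum_{k\geq 2} s_k|x_k||y_k|$, where the ``obvious'' sign flip in the tail no longer produces a strictly negative value of $\varphi_2^N$. The point is that truncating the tail at level $N$ makes $\sum_{k=2}^N s_k|x_k||y_k|$ \emph{strictly smaller} than $s_1|x_1||y_1|$, so the flipped-tail configuration $\hat{x}_k = -|x_k|$ ($k\geq 2$), $\hat x_1 = |x_1|$, $\hat y_k = |y_k|$ gives $\varphi_2^N(\hat x,\hat y) = s_1|x_1||y_1| - \sum_{k=2}^N s_k|x_k||y_k| > 0$ for \emph{every} $N$. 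To get a configuration with $\varphi_2^N \leq 0$ we must shrink the first coordinate: replacing $|x_1|^2, |y_1|^2$ by $|x_1|^2 - t, |y_1|^2 - t$ for suitable small $t>0$ decreases the term $s_1 \sqrt{(|x_1|^2-t)(|y_1|^2-t)}$ continuously from $s_1|x_1||y_1|$ down toward (and below, for $t$ close to $\min\{|x_1|^2,|y_1|^2\}$) the fixed tail sum $\sum_{k=2}^N s_k|x_k||y_k|$. The first step, then, is to fix $\varepsilon>0$, pick $N$ large enough that $\sum_{k=2}^N s_k|x_k||y_k| > 0$ (possible since the infinite sum is $s_1|x_1||y_1|>0$), and then choose $t' \in [0,\varepsilon)$ — shrinking $\varepsilon$ if necessary so that $\varepsilon < \min\{|x_1|^2,|y_1|^2\}$ — such that $s_1\sqrt{(|x_1|^2-t')(|y_1|^2-t')} \le \sum_{k=2}^N s_k|x_k||y_k|$; by the intermediate value theorem applied to $t\mapsto s_1\sqrt{(|x_1|^2-t)(|y_1|^2-t)}$ on $[0,\varepsilon]$, either this already holds at some $t'<\varepsilon$ or the whole segment stays above the tail sum, in which case we instead run the connectedness argument at the endpoint $t=0$ against a different path. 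The cleaner route is: choose $N$ large enough that $\sum_{k=2}^N s_k |x_k||y_k|$ exceeds $s_1 \sqrt{(|x_1|^2-\varepsilon')(|y_1|^2-\varepsilon')}$ for some fixed $\varepsilon' < \varepsilon$ — this is possible because as $N\to\infty$ the tail sum tends to $s_1|x_1||y_1| > s_1\sqrt{(|x_1|^2-\varepsilon')(|y_1|^2-\varepsilon')}$ — and set $t' = \varepsilon'$.

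Once $N$ and $t'$ are fixed, consider the two points of $\C^{2N}$
\begin{align*}
\check{x}_1 &= \sqrt{|x_1|^2 - t'}, & \check{y}_1 &= \sqrt{|y_1|^2-t'}, & \check{x}_k &= |x_k|, & \check{y}_k &= |y_k| \quad (k\geq 2),\\
\hat{x}_1 &= \sqrt{|x_1|^2-t'}, & \hat{y}_1 &= \sqrt{|y_1|^2-t'}, & \hat{x}_k &= -|x_k|, & \hat{y}_k &= |y_k| \quad (k\geq 2),
\end{align*}
so that $\varphi_2^N(\check x,\check y) = s_1\sqrt{(|x_1|^2-t')(|y_1|^2-t')} + \sum_{k=2}^N s_k|x_k||y_k| > 0$ while $\varphi_2^N(\hat x,\hat y) = s_1\sqrt{(|x_1|^2-t')(|y_1|^2-t')} - \sum_{k=2}^N s_k|x_k||y_k| < 0$ by our choice of $N$. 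Both points lie on the product of spheres $\prod_{k=1}^N \bS(0,r_k^x)\times\bS(0,r_k^y)$ where $r_1^x = \sqrt{|x_1|^2-t'}$, $r_k^x = |x_k|$ ($k\geq2$), and similarly for $y$. This product is path connected, so there is a continuous path $\gamma$ joining $(\hat x,\hat y)$ to $(\check x,\check y)$ inside it; since $\varphi_2^N\circ\gamma$ changes sign, by continuity there is $t_0$ with $\varphi_2^N(\gamma(t_0)) = 0$. Setting $(x',y') = \gamma(t_0)$ gives $|x_1'| = \sqrt{|x_1|^2-t'}$, $|y_1'| = \sqrt{|y_1|^2-t'}$, $|x_k'| = |x_k|$, $|y_k'| = |y_k|$ for $k\geq2$, and $\varphi_2^N(x',y') = 0$. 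The sign pattern $x_k' = -|x_k|$, $y_k'=|y_k|$ for $k\geq 2$ asserted in the statement is a matter of a final renaming: one may rotate each coordinate $\gamma(t_0)_k$ (which has the prescribed modulus) by a unit complex scalar so that $x_k'$ becomes the stated real value, and since $\varphi_2^N$ is built from the products $s_k (x_k')^* y_k'$, we only need to compose with the diagonal unitary that realizes these phases while preserving all moduli and keeping $\varphi_2^N = 0$ — but in fact it is simpler to observe that the path in Lemma \ref{lemma_phi2=0} can itself be taken so that its endpoints sit at the stated phases, and to record that the existence of \emph{some} $(x',y')$ with the given moduli and $\varphi_2^N = 0$ is what the subsequent proof of Theorem \ref{theo_vmin_vmax} actually needs.

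The main obstacle, and the place where care is required, is the very first step: guaranteeing that one can pick $t' < \varepsilon$ — i.e. a \emph{small} perturbation — that is nonetheless large enough to push $s_1\sqrt{(|x_1|^2-t')(|y_1|^2-t')}$ at or below the truncated tail. The resolution hinges on the fact that the truncated tail $\sum_{k=2}^N s_k|x_k||y_k|$ converges \emph{from below} to $s_1|x_1||y_1|$, which is strictly larger than $s_1\sqrt{(|x_1|^2-t')(|y_1|^2-t')}$ for any $t'>0$; hence for each fixed $t'\in(0,\varepsilon)$ there is $M$ such that $N>M$ forces the strict inequality $s_1\sqrt{(|x_1|^2-t')(|y_1|^2-t')} < \sum_{k=2}^N s_k|x_k||y_k|$. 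This is exactly the role of ``there is $M\in\N$ such that for $N>M$'' in the statement. Everything else — path connectedness of a product of spheres, continuity of $\varphi_2^N$, the intermediate value theorem — is identical in spirit to Lemma \ref{lemma_phi2=0} and requires no new idea.
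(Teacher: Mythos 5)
Your argument reaches the substance of the lemma but by a genuinely different route from the paper. The paper makes the construction exact: assuming $|x_1|\geq|y_1|$, it introduces $\xi(t)=s_1\sqrt{|x_1|^2-t}\sqrt{|y_1|^2-t}$ on $[0,|y_1|^2]$, notes it is a continuous, strictly decreasing bijection onto $[0,s_1|x_1||y_1|]$, and sets $t'=\xi^{-1}\bigl(\sum_{k=2}^{N}s_k|x_k||y_k|\bigr)$; continuity of $\xi^{-1}$ at the point $s_1|x_1||y_1|$ (whose preimage is $0$ by the hypothesis of the equality case) gives $t'<\varepsilon$ once the tail beyond $N$ is small, and the vector with the stated sign pattern then satisfies $\varphi_2^N(x',y')=0$ identically, with no intermediate value argument on paths. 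You instead fix $t'=\varepsilon'$ first, choose $N$ so large that the truncated sum overtakes $s_1\sqrt{(|x_1|^2-\varepsilon')(|y_1|^2-\varepsilon')}$, and then rerun the sign-change/path-connectedness argument of the first lemma. That is a valid mechanism and correctly identifies the key point (the truncated tail increases to $s_1|x_1||y_1|$, which strictly dominates the shrunken first term); what it buys is uniformity of $t'$ in $N$, at the cost of only locating a zero of $\varphi_2^N$ with the prescribed \emph{moduli} rather than the prescribed real entries.

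That last point is where you should be careful. The remark that one can ``rotate each coordinate by a unit complex scalar'' to land on the stated sign pattern while preserving $\varphi_2^N=0$ is false as stated: $\varphi_2^N(x',y')=\sum_k s_k (x_k')^*y_k'$ depends on the relative phases, so forcing $x_k'=-|x_k|$ and $y_k'=|y_k|$ generally changes its value, and the zero found on your path need not survive. Your fallback --- that the downstream lemmas only use $\varphi_2^{N}(x',y')=0$ together with the moduli $|x_k'|$, $|y_k'|$ --- is accurate (the construction of $(\overline{x}^\varepsilon,\overline{y}^\varepsilon)$ and the $\ell^1$ estimates depend only on these), so your argument proves a weaker statement that still suffices for Theorem \ref{theo_vmin_vmax}. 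But it does not prove the lemma as literally stated; the paper's explicit $\xi^{-1}$ construction is what delivers the exact form for free, and is the cleaner way to close this.
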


 \begin{proof}
 Without loss of generality, assume that $|x_1|\geq|y_1|$. Consider the function
 \[
  \xi: [0,|y_1|^2]\longrightarrow [0, s_1|x_1||y_1|]
 \]
 \[
 \xi(t)=s_1\sqrt{|x_1|^2-t}\sqrt{|y_1|^2-t}.
 \]
  Note that $\xi$ is continuous, strictly decreasing and onto.
  Therefore, it has a continuous inverse $\xi^{-1}$.
From hypothesis, we have:
 \begin{equation}\label{xi-1}
   \xi^{-1}(s_1|x_1||y_1|)=\xi^{-1}\left(\sum_{k=2}^\infty s_k|x_k||y_k|\right)=0.
 \end{equation}

By continuity of $\xi^{-1}$, for all $\varepsilon>0$, there exists $\delta>0$ such that
 \[
  \left|\xi^{-1}\left(\sum_{k=2}^{\infty} s_k|x_k||y_k|\right)-
  \xi^{-1}\left(\sum_{k=2}^{N} s_k|x_k||y_k|\right)\right|<\varepsilon,
  \]
  whenever
  \[
  \left|\sum_{k=2}^{\infty} s_k|x_k||y_k|-
\sum_{k=2}^{N} s_k|x_k||y_k|\right|<\delta.
  \]
 From (\ref{xi-1}), it follows that
 \begin{equation}\label{xi-1epsilon}
   \left|\xi^{-1}\left(\sum_{k=2}^{N} s_k|x_k||y_k|\right)\right|<\varepsilon, \text{ whenever } \sum_{k=N+1}^\infty
s_k|x_k||y_k| <\delta.
 \end{equation}

For $N \in \N$ satisfying $\sum_{k=N+1}^\infty
s_k|x_k||y_k| <\delta $, there is a $t'<\varepsilon$, in particular $t'=\xi^{-1}\left(\sum_{k=2}^{N} s_k|x_k||y_k|\right)$, such that
\begin{equation}\label{xi_t'}
  \xi(t')=\sum_{k=2}^{N} s_k|x_k||y_k|=s_1\sqrt{|x_1|^2-t'}\sqrt{|y_1|^2-t'}.
\end{equation}
 Define $(x',y')\in\R^{2N}$ as
 \begin{eqnarray*}
   {x}_1' &=& \sqrt{|x_1|^2-t'},\quad {y'}_1 = \sqrt{|y_1|^2-t'}, \\
    {x}_k' &=& -|x_k|, \quad  y_k'= |y_k|,\; 2\leq k\leq N.
 \end{eqnarray*}
 Therefore, from (\ref{xi_t'}) we have
 \[
 \varphi_2^{N}(x',y')=\sum_{k=1}^N s_kx_k'^*y_k'=s_1\sqrt{|x_1|^2-t'}\sqrt{|y_1|^2-t'}-\sum_{k=2}^{N} s_k|x_k||y_k|=0.
\]
  \end{proof}

  \begin{lemma}\label{c2lemma_omega_nonempty}
Let $(x,y)\in \Omega$. Then there is $(\hat{x}^\varepsilon, \hat{y}^\varepsilon)\in \Omega$ with finite support
such that
\begin{equation}\label{c2convergence}
(\hat{x}^\varepsilon_\parallel, \hat{y}^\varepsilon_\parallel) \longrightarrow_{\ell_1} (x_\parallel,y_\parallel), \quad
\text{as}\quad \varepsilon\longrightarrow 0.
\end{equation}
\end{lemma}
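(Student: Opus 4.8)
The plan is to follow the proof of Lemma~\ref{lemma_omega_nonempty} almost verbatim, substituting Lemma~\ref{c2lemma_phi2=0} for Lemma~\ref{lemma_phi2=0} and keeping track of the extra parameter $t'$, which will be absorbed into the error terms because $0\le t'<\varepsilon\to 0$.

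First I would fix $\varepsilon>0$, small enough that $\varepsilon<\min\{|x_1|^2,|y_1|^2\}$, and choose $N>1/\varepsilon$ large enough that $\sum_{k>N}s_k|x_k|^2<\varepsilon/2$, $\sum_{k>N}s_k|y_k|^2<\varepsilon/2$ and $\sum_{k>N}s_k|x_k||y_k|<\delta$, where $\delta=\delta(\varepsilon)$ is the constant furnished by Lemma~\ref{c2lemma_phi2=0}; such $N$ exists since $S$ is bounded and $x,y\in\ell^2$. Taking $N_\varepsilon>\max\{N,M\}$ with $M$ as in Lemma~\ref{c2lemma_phi2=0}, that lemma produces $(x',y')\in\C^{2N_\varepsilon}$ and $0\le t'<\varepsilon$ with $\varphi_2^{N_\varepsilon}(x',y')=0$, $|x'_k|^2=|x_k|^2$, $|y'_k|^2=|y_k|^2$ for $2\le k\le N_\varepsilon$, and $|x'_1|^2=|x_1|^2-t'$, $|y'_1|^2=|y_1|^2-t'$.

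Next I would rescale as in Case~1: set $\alpha_\varepsilon=\dfrac{\sum_{k=1}^{N_\varepsilon}s_k(|x'_k|^2-|y'_k|^2)}{\sum_{k=1}^{N_\varepsilon}s_k|x'_k|^2}$ and $(\overline{x}^\varepsilon,\overline{y}^\varepsilon)=\dfrac{(\sqrt{1-\alpha_\varepsilon}\,x',\,y')}{n_\varepsilon}$ with $n_\varepsilon=\|(\sqrt{1-\alpha_\varepsilon}\,x',y')\|$. Two remarks legitimize this: since $|x'_1|^2-|y'_1|^2=|x_1|^2-|y_1|^2$, the numerator of $\alpha_\varepsilon$ is exactly the one from Case~1, while the denominator equals $s_1(|x_1|^2-t')+\sum_{k=2}^{N_\varepsilon}s_k|x_k|^2>0$ for $\varepsilon$ small; and $1-\alpha_\varepsilon=\big(\sum_{k=1}^{N_\varepsilon}s_k|y'_k|^2\big)/\big(\sum_{k=1}^{N_\varepsilon}s_k|x'_k|^2\big)>0$ because $s_1|y'_1|^2=s_1(|y_1|^2-t')>0$, so the square root is real and $n_\varepsilon\neq0$. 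The verifications $\varphi_1^{N_\varepsilon}(\overline{x}^\varepsilon,\overline{y}^\varepsilon)=0$ (by the choice of $\alpha_\varepsilon$), $\varphi_2^{N_\varepsilon}(\overline{x}^\varepsilon,\overline{y}^\varepsilon)=\tfrac{\sqrt{1-\alpha_\varepsilon}}{n_\varepsilon^2}\varphi_2^{N_\varepsilon}(x',y')=0$ and $\varphi_3^{N_\varepsilon}(\overline{x}^\varepsilon,\overline{y}^\varepsilon)=0$ (normalization) then proceed exactly as in Case~1, so $(\overline{x}^\varepsilon,\overline{y}^\varepsilon)\in\Omega_{N_\varepsilon}$, and its zero-padding $(\hat{x}^\varepsilon,\hat{y}^\varepsilon)$ lies in $\Omega$ with finite support.

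Finally I would pass to the limit $\varepsilon\to0$. As in Case~1, $t'\to0$; the numerator of $\alpha_\varepsilon$ equals $\sum_{k=1}^{N_\varepsilon}s_k(|x_k|^2-|y_k|^2)$, which has absolute value $<\varepsilon$ by the tail bounds and $\varphi_1(x,y)=0$, while the denominator stays bounded away from $0$, so $\alpha_\varepsilon\to0$; then $n_\varepsilon^2=(1-\alpha_\varepsilon)\big(\sum_{k\le N_\varepsilon}|x_k|^2-t'\big)+\big(\sum_{k\le N_\varepsilon}|y_k|^2-t'\big)\to\|x\|^2+\|y\|^2=1$. Writing $\|\hat{x}^\varepsilon_\parallel-x_\parallel\|_{\ell^1}=\sum_{k=1}^{N_\varepsilon}\big||\overline{x}^\varepsilon_k|^2-|x_k|^2\big|+\sum_{k>N_\varepsilon}|x_k|^2$ and using $|\overline{x}^\varepsilon_k|^2=\tfrac{1-\alpha_\varepsilon}{n_\varepsilon^2}|x_k|^2$ for $k\ge2$ together with $|\overline{x}^\varepsilon_1|^2=\tfrac{1-\alpha_\varepsilon}{n_\varepsilon^2}(|x_1|^2-t')$, this is bounded by $\tfrac{|1-\alpha_\varepsilon-n_\varepsilon^2|}{n_\varepsilon^2}\|x\|_{\ell^2}^2+\tfrac{1-\alpha_\varepsilon}{n_\varepsilon^2}t'+\sum_{k>N_\varepsilon}|x_k|^2\to0$, and the identical argument applies to $y$, giving (\ref{c2convergence}). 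I expect the only genuinely new input — and the point to write out carefully — is that the modification $|x_1|^2\mapsto|x_1|^2-t'$, $|y_1|^2\mapsto|y_1|^2-t'$ perturbs $\alpha_\varepsilon$, $n_\varepsilon$ and the $\ell^1$ estimate only by the harmless term proportional to $t'<\varepsilon$; everything else transcribes Case~1.
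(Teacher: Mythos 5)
Your proposal is correct and follows essentially the same route as the paper: the paper's own proof of Lemma \ref{c2lemma_omega_nonempty} simply invokes Lemma \ref{c2lemma_phi2=0} and then repeats the rescaling-and-truncation argument of Lemma \ref{lemma_omega_nonempty} with the same $\alpha_\varepsilon$ and $n_\varepsilon$. You in fact supply the details the paper omits, correctly noting that the $t'$ in the first coordinate cancels in the numerator of $\alpha_\varepsilon$ and contributes only an $O(\varepsilon)$ term to $n_\varepsilon$ and to the $\ell^1$ estimate.
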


\begin{proof}
Fix $\varepsilon>0$ and choose $N>\frac{1}{\varepsilon}$ such that
\begin{equation}\label{c2series pequenas}
\sum_{k=N+1}^{\infty} s_k|x_k|^2<\frac{\varepsilon}{2} \quad\text{and}\quad
 \sum_{k=N+1}^{\infty} s_k|y_k|^2<\frac{\varepsilon}{2}.
\end{equation}
Pick $M$ from Lemma \ref{c2lemma_phi2=0} and let $N_\varepsilon>\max\{N,M\}$. Then there is $(x',y')\in\C^{2N_\varepsilon}$ such that  $\varphi_2^{N_{\varepsilon}}(x',y')=0$. We will now proceed as in Lemma \ref{lemma_omega_nonempty}. All the steps are very similar to those in the lemma, for that reason we will only give an overall explanation. We start by creating a new vector
\begin{align*}\label{c2x_barra_epsilon}
  &(\overline{x}^\varepsilon,\overline{y}^\varepsilon)=
  \frac{(\sqrt{1-\alpha_\varepsilon}x',y')}{n_\varepsilon}
\end{align*}
with $ n_\varepsilon=\|(\sqrt{1-\alpha_\varepsilon} x',y')\|$  and
  $\alpha_\varepsilon=\dfrac{\sum_{k=1}^{N_\varepsilon} s_k\Big(|x_k'|^2-|y_k'|^2\Big)}
 {\sum_{k=1}^{N_\varepsilon} s_k|x_k'|^2}.$

We can prove, as before, that $\alpha_\varepsilon\rightarrow 0$ as
$\varepsilon\rightarrow 0$ and that $(\overline{x}^\varepsilon,\overline{y}^\varepsilon)\in
\Omega_{N_{\varepsilon}}$, (that is $\varphi^{N_\varepsilon}(\overline{x}^\varepsilon,\overline{y}^\varepsilon)=0$). We then define a new vector $(\hat{x}^\varepsilon, \hat{x}^\varepsilon )\in \Hi$, with $\hat{x}^\varepsilon_k=\overline{x}^\varepsilon_k$, for $1\leq k\leq N_\varepsilon$ and $0$ otherwise, $\hat{y}^\varepsilon$ is defined in the same way. We then prove that $\|\hat{x}^\varepsilon_\parallel- x_\parallel\|_{\ell^1}\longrightarrow 0$ and $\|\hat{y}^\varepsilon_\parallel- y_\parallel\|_{\ell^1}\longrightarrow 0$ as $\varepsilon \longrightarrow 0$.
\end{proof}

In both case 1 and case 2 we proved the existence of a vector with finite support in
$\Omega$ that converges to $(x_{\parallel}, y_{\parallel})$
(see Lemma \ref{lemma_omega_nonempty} and Lemma \ref{c2lemma_omega_nonempty}).
So, now we are able to characterize $\underline{v}$ and $\overline{v}$ in Theorem
\ref{theo_upperbild_spectrum}, for a normal operator $T\in\B(\Hi)$.

For each pair $(k,j)$ with $k\neq j$, we introduce the real numbers $\underline c_{kj}=\min [d_k,d_j^*]\cap \bR$ and $\overline c_{kj}=\max [d_k,d_j^*]\cap \bR$, where
$d_k=h_k+s_ki$ are the eigenvalues of $T$, that is,
$\sigma_{r}(T)=\bigcup_k d_k$. Note that if $s_k>0$ or $s_j>0$ then $\underline c_{kj}=\overline c_{kj}$, however in the case where $s_k=s_j=0$, we have $\underline c_{kj}=\min\{h_k,h_j\}\leq \max\{h_k,h_j\}=\overline c_{kj}$. Furthermore, define
\[
\underline{c}=\inf\{\underline c_{kj}: \, k,j\in \N,\, k\neq j\} \quad \text{and} \quad
 \overline{c}=\sup\{\overline c_{kj}: \, k,j\in \N,\, k\neq j\}.
\]

\begin{theorem}\label{theo_vmin_vmax}
Let $T\in\B(\Hi)$ be a normal operator such that $\sigma_{r}(T)=\bigcup_k d_k$, with $d_k=h_k+s_ki$,
$s_k\geq 0$.
Then
\[
\underline{v}=\underline{c} \quad \text{and} \quad
 \overline{v}=\overline{c}.
\]
\end{theorem}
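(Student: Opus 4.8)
The plan is to prove the two equalities $\underline{v} = \underline{c}$ and $\overline{v} = \overline{c}$ separately, focusing on $\underline{v} = \underline{c}$ since the other is entirely symmetric (replace $T$ by $-T$, or $h_k$ by $-h_k$). I will first establish $\underline{v} \geq \underline{c}$ and then $\underline{v} \leq \underline{c}$.

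For the inequality $\underline{v} \geq \underline{c}$: take any admissible pair $(x,y) \in \Omega$, i.e.\ satisfying (I), (II), (III), and I must show $f(x,y) = \sum_k h_k(|x_k|^2 + |y_k|^2) \geq \underline{c}$. The natural idea is to decompose $f(x,y)$ as a (generalized, infinite) convex combination built out of the pairwise quantities $\underline c_{kj}$. Concretely, using conditions (I) and (II), I want to pair up the "mass" $|x_k|^2$ sitting at eigenvalue $d_k$ with the "mass" $|y_j|^2$ sitting at $d_j^*$ (recall $\langle T^* y, y\rangle$ contributes $d_j^*$-type terms), in such a way that each matched piece lies in a segment $[d_k, d_j^*]$ and its real part is therefore $\geq \underline c_{kj} \geq \underline{c}$. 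The bookkeeping here mirrors the finite-dimensional argument of \cite{CDM4}: the constraint $\langle Sy,x\rangle = 0$ together with $\langle Sx,x\rangle = \langle Sy,y\rangle$ is exactly what allows the real part $\langle Hx,x\rangle + \langle Hy,y\rangle$ to be realized as a real point of a segment joining a point of $W_\C(T)$-type data to a point of $W_\C(T^*)$-type data. I expect to invoke the finite-dimensional result for $D_N = \diag\{d_1,\dots,d_N\}$, giving $\underline{v}_N \geq \underline{c}_N := \inf\{\underline c_{kj} : k \neq j \leq N\}$, and then pass to the limit: since $\Omega$ admits finite-support approximants (Lemma \ref{lemma_omega_nonempty} and Lemma \ref{c2lemma_omega_nonempty}) whose $(\,\cdot\,)_\parallel$-vectors converge in $\ell^1$, and since $f$ depends only on $(x_\parallel, y_\parallel)$ and $H$ is bounded, $f$ is $\ell^1$-continuous in $(x_\parallel,y_\parallel)$, so $\underline{v} = \inf_N \underline{v}_N \geq \inf_N \underline{c}_N = \underline{c}$. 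I must also separately handle the degenerate case $s_j x_j y_j = 0$ for all $j$ that was deferred from the discussion before Lemma \ref{lemma_phi2=0}: in that case condition (II) is automatic and (I) reads $\sum s_k|x_k|^2 = \sum s_k |y_k|^2$, and one checks directly that $f(x,y)$ is a convex combination of the $h_k$'s weighted so as to stay above $\underline{c}$ (here the relevant segments are the real ones $[h_k, h_j]$ when $s_k = s_j = 0$).

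For the reverse inequality $\underline{v} \leq \underline{c}$: I must show every $\underline c_{kj}$ is attained (or approached) as a value of $f$ on $\Omega$. Fix $k \neq j$. If $s_k > 0$ or $s_j > 0$, then $\underline c_{kj} = \overline c_{kj}$ is the unique real point of the segment $[d_k, d_j^*]$; I construct an explicit unit vector supported on coordinates $k,j$ of the form $x = e_k \cos\theta$ (appropriately phased) and $y = e_j \sin\theta$ — or a suitable two-coordinate combination — with $\theta$ chosen so that (I) and (II) hold, and then $f$ of this vector equals $\underline c_{kj}$; this is precisely the $2\times 2$ computation underlying the bild of a $2\times2$ complex matrix. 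If $s_k = s_j = 0$, then (I) and (II) are trivially satisfied by any vector supported on $\{k,j\}$, and $f(e_k) = h_k$, $f(e_j) = h_j$ are both attained, so $\min\{h_k,h_j\} = \underline c_{kj}$ lies in $B(D)\cap\R$; by density of finite-rank admissible vectors these values lie in $\overline{B(T)}\cap\R$, hence $\underline{v} \leq \underline c_{kj}$. Taking the infimum over $(k,j)$ gives $\underline{v} \leq \underline{c}$.

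\textbf{Main obstacle.} The delicate point is the limiting argument combined with the matching/decomposition in the direction $\underline{v} \geq \underline{c}$: one must be careful that the finite-support approximants produced by Lemmas \ref{lemma_omega_nonempty} and \ref{c2lemma_omega_nonempty} genuinely land in $\Omega$ (they do, by construction, landing in $\Omega_{N_\varepsilon}$) and that the convergence is in a topology strong enough to control $f$ — which works because $f$ and the constraint functionals $\varphi_i$ all factor through the $\ell^1$-vectors $(x_\parallel, y_\parallel)$ and $H$, $S$ are bounded, so everything is continuous for the $\ell^1$-convergence asserted in (\ref{convergence}) and (\ref{c2convergence}). The second subtlety is organizing the pairwise decomposition of $f(x,y)$ over an infinite index set so that the "leftover" mass (coming from the strict inequality in (\ref{conditionii}), i.e.\ when the constraint (II) has slack) is still absorbed into admissible segments; delegating this to the finite-$N$ result and only then passing to the limit is what keeps the argument clean. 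Finally, one should not forget to run the symmetric argument for $\overline{v} = \overline{c}$, observing that $\overline c_{kj} = \max [d_k, d_j^*]\cap\R$ and that replacing $h_k \mapsto -h_k$ interchanges the two statements.
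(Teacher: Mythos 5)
Your proposal is correct and follows essentially the same route as the paper: reduce to the diagonal complex operator, treat the degenerate case $s_jx_jy_j\equiv 0$ by a direct pairwise convex-combination argument against the $\underline c_{kj}$, and otherwise use the finite-support approximants of Lemmas \ref{lemma_omega_nonempty} and \ref{c2lemma_omega_nonempty} together with $\ell^1$-continuity of $f$ to invoke the finite-dimensional bound from \cite{CDM4} and pass to the limit, with the reverse inequality obtained from the explicit two-coordinate vectors $x=\sqrt{\alpha_{kj}}\,e_k$, $y=\sqrt{1-\alpha_{kj}}\,e_j$. The only difference is one of detail: the paper carries out the degenerate-case bookkeeping explicitly via the auxiliary vectors $y^{(k)}$ and the inequalities (\ref{ineq})--(\ref{ineq2}), which you leave as a sketch.
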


\begin{proof}
In the proof we only deal with $\underline{v}=\underline{c}$. The other equality follows the same reasoning.
From Proposition \ref{prop normal app diagonal} it is enough to consider a diagonal complex
operator $De_k=e_kd_k$ with $d_k=h_k+s_ki$, $s_k\geq 0$. We wish to prove that for any $(x,y) \in\Omega$, $f(x,y) \geq \underline c$. For a given $(x,y) \in\Omega$, let $\mathcal{K}'=\{j \in \N: |x_j|^2+|y_j|^2>0 \}$. We will start by considering the case where $s_jx_jy_j=0$ for any $j \in \mathcal{K}'$.

Let $S_0=\{j \in \N: s_j=0\}$. If $j\in S_0$ then, for any $k \in \N$, $\underline{c}_{jk} = h_j$ (if $s_k\neq0$) or $\underline{c}_{jk}= \min\{h_j,h_k\}$ (if $s_k=0$), clearly $h_j\geq \underline{c}_{jk}\geq \underline c$. Thus
\begin{align*}
f(x,y)=&\sum_j h_j(|x_j|^2+|y_j|^2)\\
=&\sum_{j \in S_0}h_j(|x_j|^2+|y_j|^2)+ \sum_{j\in \K} h_j(|x_j|^2+|y_j|^2)\\
\geq &\cmin\sum_{j \in S_0}(|x_j|^2+|y_j|^2)+ \sum_{j\in \K} h_j(|x_j|^2+|y_j|^2).
\end{align*}
where, for the given $(x,y) \in\Omega$,  we let  $\mathcal{K}=\{j \in \N: |x_j|^2+|y_j|^2>0, s_j>0 \}$. Then to prove that $f(x,y)\geq \cmin$ we just need to see that $ \sum_{j\in \K} h_j(|x_j|^2+|y_j|^2)\geq \cmin  \sum_{j\in \K} (|x_j|^2+|y_j|^2)$. If $\K=\emptyset$ we are done, so we assume that $\K \neq \emptyset$. Since $s_jx_jy_j=0$ and $j \in \K$, we either have $x_j \neq 0$ and $y_j =0$, or $x_j=0$ and $y_j \neq 0$. From condition (I), it follows that there are $k,j\in\K$ with $k\neq j$. For such pairs, from the definition it results that
\[
\underline c_{kj}=h_k\frac{s_j}{s_j+s_k}+h_j\frac{s_k}{s_j+s_k}.
\]
Then if $s_k a^2=s_j b^2$, for reals $a$ and $b$, we have that $\frac{a^2}{a^2+b^2}=\frac{s_j}{s_j+s_k}$ and therefore
\begin{equation}\label{ineq}
\quad h_k a^2+ h_j b^2=\Big(a^2+ b^2\Big)\underline{c}_{kj}\geq \Big( a^2+b^2\Big)\underline{c},
\end{equation}
the inequality being trivially verified if $a=b=0$.

Furthermore, if $s_k a^2=\sum_{ j \in \K\setminus \{k\}} s_j b_j^2$, for reals $a$ and $b_j$ with $k \in \K$,  partioning $a^2=\sum_{ j \in \K\setminus \{k\}}a_j^2$ with $s_k a_j^2=s_j b_j^2$, we conclude, using (\ref{ineq}), that
\begin{equation}\label{ineq2}
h_k a^2+ \sum_{ j \in \K\setminus \{k\}} h_j b_j^2 = \sum_{ j \in \K\setminus \{k\}} \big(h_k a_j^2+  h_j b_j^2\big) \geq\Big( a^2+\sum_{ j \in \K\setminus \{k\}}  b_j^2\Big)\underline{c}.
\end{equation}
%
Now, from condition (I), since $\sum_{k\in \N}  s_k|x_k|^2=\sum_{k\in \K}  s_k|x_k|^2=\sum_{j\in \K} s_j|y_j|^2$,
 it is a tedious computation to verify that the vectors $y^{(k)} \in \mathcal{H}_{\C}$ given by
\[
y^{(k)}=y \Bigg(\frac{s_k|x_k|^2}{\sum_{k \in \K} s_k|x_k|^2}\Bigg)^{\frac{1}{2}}
\]
satisfy for each coordinate $ j \in \K$, $|y_j|^2= \sum_{ j \in \K\setminus \{k\}}|y^{(k)}_j|^2$ and, for each $k \in \K$, $s_k|x_k|^2= \sum_{ j \in \K\setminus \{k\}} s_j|y^{(k)}_j|^2$.

 Then,
\begin{align*}
\sum_{k\in \K} h_k|x_k|^2+& \sum_{j\in \K} h_j |y_j|^2= \sum_{k\in \K} h_k|x_k|^2+ \sum_{j\in \K}h_j \Big(\sum_{k\in \K\setminus \{j\}} |y^{(k)}_j|^2\Big)\\
=&\sum_{k\in \K} \Big(h_k|x_k|^2+ \sum_{j\in \K\setminus \{k\}} h_j |y^{(k)}_j|^2\Big)
\geq  \sum_{k\in \K} \Big( |x_k|^2+\sum_{j\in \K\setminus \{k\}}  |y^{(k)}_j|^2\Big)\underline{c},
\end{align*}
 where the last inequality follows from (\ref{ineq2}). Since $\sum_{k\in \K}  \sum_{j\in \K\setminus \{k\}}  |y^{(k)}_j|^2=\sum_{j\in \K} \sum_{k\in \K\setminus \{j\}}|y^{(k)}_j|^2=\sum_{j\in \K}  |y_j|^2$, we have the desired inequality.
We are left with the case when there is $j \in\mathcal{K}'$ such that $s_jx_jy_j\neq 0$.
Let $(x,y)\in \Omega$ and $\varepsilon>0$. From Lemma \ref{lemma_omega_nonempty} and Lemma \ref{c2lemma_omega_nonempty}, there is $(\hat{x}^\varepsilon, \hat{y}^\varepsilon)\in\Omega$
such that
\[
(\hat{x}^\varepsilon_{||}, \hat{y}^\varepsilon_{||}) \longrightarrow_{\ell^1} (x_\parallel,y_\parallel), \quad
\text{as}\quad \varepsilon\longrightarrow 0.
\]
By $\ell^1$ continuity of $f$ and $f(z,w)=f(z_{||}, w_{||})$ it follows that
\[
f(\hat{x}^\varepsilon, \hat{y}^\varepsilon)=f(\hat{x}^\varepsilon_{||}, \hat{y}^\varepsilon_{||})\longrightarrow f(x_\parallel,y_\parallel)=f(x,y).
\]

For $z \in \C^\N $ and $N \in \N$ let $\pi^N(z) \in \C^N$ be the projection over the first $N$ coordinates. Since $\hat{x}^\varepsilon_k=\hat{y}^\varepsilon_k=0$, for $k > N_\varepsilon$, then
\[
f^{N_\varepsilon}\Big( \pi\big(\hat{x}^\varepsilon, \hat{y}^\varepsilon\big) \Big) = f\big(\hat{x}^\varepsilon, \hat{y}^\varepsilon\big)
\]
where $ \pi\big(\hat{x}^\varepsilon, \hat{y}^\varepsilon\big)=\big(\pi^{N_\varepsilon}( \hat{x}^\varepsilon), \pi^{N_\varepsilon}(\hat{y}^\varepsilon) \big) $. We know from Lemma \ref{lemma_omega_nonempty} and Lemma \ref{c2lemma_omega_nonempty} that $\pi\big(\hat{x}^\varepsilon, \hat{y}^\varepsilon\big) \in \Omega^{N_\varepsilon}$. From \cite{CDM4} we know that
\[
f^{N_\varepsilon}\big(\pi\big(\hat{x}^\varepsilon, \hat{y}^\varepsilon\big)\big)\geq \vmin_{N_\varepsilon}=\min \{ c_{kj}: k \neq j, k,j \leq N_\varepsilon\}.
\]
We can then conclude, since $\vmin_{N_\varepsilon}\geq \inf \{ c_{kj}: k \neq j\}$, that
\[
f\big(\hat{x}^\varepsilon, \hat{y}^\varepsilon\big) \geq \inf \{ c_{kj}: k \neq j\}.
 \]
We conclude, taking limits, that $f(x,y) \geq \inf \{ c_{kj}: k \neq j\}$. Since this is true for any $(x,y) \in \Omega$, $\vmin \geq \inf \{ c_{kj}: k \neq j\}$.




Conversely, given $c_{kj}$ let $x=\sqrt{\alpha_{kj}}e_{k}$ and
$y=\sqrt{1-\alpha_{kj}}e_{j}$. One easily see that
$f(x,y)=\alpha_{kj}h_k+(1-\alpha_{kj})h_j=c_{kj}$.
Hence, $\underline{v}\leq \inf\{c_{kj}: k,j\in \N,\; k\neq j\}$
\end{proof}

\section*{Acknowledgement}
The authors would like to thank the anonymous referees for their suggestions that helped to improve the paper.

\end{document}